\documentclass[leqno]{siamltex}
\usepackage{amsmath}
\usepackage{amssymb}
\usepackage{graphicx}

\newcommand{\bn}{{\bf n}}
\newcommand{\bx}{{\bf x}}

\newcommand{\pT}{{\partial T}}

\def\bbQ{\mathbb{Q}}
\def\T{{\mathcal T}}

\def\l{{\langle}}
\def\r{{\rangle}}
\def\3bar{{|\hspace{-.02in}|\hspace{-.02in}|}}

\newtheorem{algorithm}{Weak Galerkin Algorithm}

\setlength{\parindent}{0.25in} \setlength{\parskip}{0.08in}
\def\ad#1{\begin{aligned}#1\end{aligned}}  \def\b#1{\mathbf{#1}}
\def\a#1{\begin{align*}#1\end{align*}} \def\an#1{\begin{align}#1\end{align}}

\title{A stabilizer free weak Galerkin
  method for the Biharmonic Equation on Polytopal Meshes}

\author{Xiu Ye\thanks{Department of
Mathematics, University of Arkansas at Little Rock, Little Rock, AR
72204 (xxye@ualr.edu). This research was supported in part by
National Science Foundation Grant DMS-1620016.}
\and
Shangyou Zhang\thanks{Department of
Mathematical Sciences, University of Delaware, Newark, DE 19716 (szhang@udel.edu).}
}

\begin{document}

\maketitle

\begin{abstract}
A new stabilizer free weak Galerkin (WG) method is introduced and analyzed for the
 biharmonic equation.
Stabilizing/penalty terms  are often necessary in the finite element formulations
    with discontinuous approximations  to ensure the stability of the methods.
Removal of  stabilizers will simplify finite element formulations and
   reduce programming complexity.
This  stabilizer free WG method has an ultra simple formulation
    and can work on general partitions with polygons/polyhedra.
 Optimal order error estimates in a discrete $H^2$ for $k\ge 2$ and in $L^2$ norm
    for $k>2$ are established for the corresponding weak Galerkin finite element
   solutions.
  Numerical results are provided to  confirm the theories.
\end{abstract}

\begin{keywords}
weak Galerkin, finite element methods, weak Laplacian,
biharmonic equations, polytopal meshes
\end{keywords}

\begin{AMS}
Primary, 65N15, 65N30, 76D07; Secondary, 35B45, 35J50
\end{AMS}
\pagestyle{myheadings}

\section{Introduction}

We consider the biharmonic equation of the form
\begin{eqnarray}
\Delta^2 u&=&f\quad \mbox{in}\;\Omega,\label{pde}\\
u&=&g\quad\mbox{on}\;\partial\Omega,\label{bc-d}\\
\frac{\partial u}{\partial
n}&=&\phi\quad\mbox{on}\;\partial\Omega,\label{bc-n}
\end{eqnarray}
where $\Omega$ is a bounded polytopal domain in $\mathbb{R}^d$.

\medskip
For the biharmonic problem (\ref{pde}) with Dirichlet and Neumann boundary conditions (\ref{bc-d}) and (\ref{bc-n}), the corresponding
weak form is given by seeking $u\in H^2(\Omega)$ satisfying
$u|_{\partial \Omega}=g$ and $\frac{\partial u}{\partial
n}|_{\partial \Omega}=\phi$ such that
\begin{equation}\label{wf}
(\Delta u, \Delta v) = (f, v)\qquad \forall v\in H_0^2(\Omega),
\end{equation}
where $H_0^2(\Omega)$ is the subspace of $H^2(\Omega)$ consisting of
functions with vanishing value and normal derivative on
$\partial\Omega$.

It is known that $H^2$-conforming
methods  require $C^1$-continuous piecewise polynomials
on a simplicial meshes, which imposes
difficulty in practical computation. Due to the complexity in the
construction of $C^1$-continuous elements, $H^2$-conforming finite
element methods are rarely used in practice for solving the
biharmonic equation.

As an alternative approach, \  nonconforming \ and discontinuous
finite element methods have been developed for solving the
biharmonic equation over the last several decades. The Morley
element \cite{morley} is a well-known example of nonconforming
element for the biharmonic equation by using piecewise quadratic
polynomials. The weak Galerkin finite element methods use discontinuous
  approximations on general polytopal meshes introduced first in \cite{wy}.
Many WG finite element methods have been developed for forth order problems
\cite{wg-bi1,wg-bi2,wg-bi3,wg-bi4,wg-bi5,wg-bi6,wg-bi7}.
These weak Galerkin finite element methods
   for (\ref{pde})-(\ref{bc-n}) have the following
  symmetric, positive definite and parameter independent formulation:
\begin{equation}\label{wf1}
(\Delta_w u_h, \Delta_w v)+s(u_h,v) = (f, v).
\end{equation}
 The stabilizer $s(\cdot,\cdot)$ in (\ref{wf1}) is necessary to guarantee the well posedness and the convergence of the methods.

The purpose of the work is to further simplify the WG formulation (\ref{wf1}) by removing the stabilizer to obtain an ultra simple formulation for the biharmonic equation:
\begin{equation}\label{wf2}
(\Delta_w u_h, \Delta_w v) = (f, v).
\end{equation}
We can obtain a stabilizer free WG method (\ref{wf2}) by appropriately designing the weak Laplacian $\Delta_w$. The idea is to raise the degree of polynomials used to compute weak Laplacian $\Delta_w$.  Using higher degree polynomials in computation of weak Laplacian will not change the size, neither the global sparsity of the stiffness matrix.

This new stabilizer free WG method for the forth order problem (\ref{bc-d})-(\ref{bc-n}) has an ultra simple symmetric positive definite formulation (\ref{wf2}) and can work on general polytopal meshes. For second order elliptic problems, stabilizer free WG methods have been studied in \cite{liu,wy, sfwg-soe}. However for forth order problems, to the best of our knowledge, this is the first finite element method without any stabilizers for totally discontinuous approximations. Optimal order
error estimates in a discrete $H^2$ norm is established for the
corresponding WG finite element solutions. Error estimates in the $L^2$ norm are also derived with a sub-optimal order of
convergence for the lowest order element and an optimal order of
convergence for all high order of elements. Numerical results are
presented to confirm the theory of convergence.

\section{Weak Galerkin Finite Element Methods}\label{Section:wg-fem}

Let ${\mathcal T}_h$ be a partition of the domain $\Omega$ consisting of
polygons in two dimension or polyhedra in three dimension satisfying a set of conditions defined in \cite{wy-mixed} and additional conditions specified in Lemma \ref{l-m1} and Lemma \ref{l-m2}.
Denote by ${\cal E}_h$ the set of all edges or flat faces in ${\cal
T}_h$, and let ${\cal E}_h^0={\cal E}_h\backslash\partial\Omega$ be
the set of all interior edges or flat faces.

For simplicity, we adopt the following notations,
\begin{eqnarray*}
(v,w)_{\T_h} &=& \sum_{T\in\T_h}(v,w)_T=\sum_{T\in\T_h}\int_T vw d\bx,\\
 \l v,w\r_{\partial\T_h}&=&\sum_{T\in\T_h} \l v,w\r_\pT=\sum_{T\in\T_h} \int_\pT vw ds.
\end{eqnarray*}
Let $P_k(K)$ consist all the polynomials degree less or equal to $k$ defined on $K$.

First we introduce a set of normal directions on ${\cal E}_h$ as follows
\begin{equation}\label{thanks.101}
{\cal D}_h = \{\bn_e: \mbox{ $\bn_e$ is unit and normal to $e$},\
e\in {\cal E}_h \}.
\end{equation}
Then, we can define a weak Galerkin finite element space $V_h$ for $k\ge 2$ as follows
\begin{equation}\label{vh}
V_h=\{v=\{v_0,v_b, v_{n}\bn_e\}:\ v_0\in P_{k}(T),  v_b\in
P_{k}(e),
 v_{n}\in P_{k-1}(e), e\subset\partial T\},
\end{equation}
where $v_n$ can be viewed as an approximation of $\nabla v_0\cdot\bn_e$.

Denote by $V_h^0$ a subspace
of $V_h$ with vanishing traces,
$$
V_h^0=\{v=\{v_0,v_b,v_{n}\bn_e\}\in V_h, {v_b}|_e=0,\ v_{n}\bn_e\cdot\bn|_e=0,\
e\subset\partial T\cap\partial\Omega\}.
$$

A weak Laplacian operator, denoted by $\Delta_{w}$,
is defined as the unique polynomial $\Delta_{w}v \in P_j(T)$ for $j>k$ that
satisfies the following equation
\begin{equation}\label{wl}
(\Delta_{w} v, \ \varphi)_T = ( v_0, \ \Delta\varphi)_T-\l v_b,\
\nabla\varphi\cdot\bn\r_\pT +\l v_n\bn_e\cdot\bn, \ \varphi\r_\pT,\quad
\forall \varphi\in P_j(T).
\end{equation}

Let $Q_0$, $Q_b$ and $Q_n$ be the locally defined $L^2$ projections onto $P_{k}(T)$, $P_{k}(e)$ and $P_{k-1}(e)$ accordingly on each element $T\in\T_h$ and $e\subset \pT$. For the true solution $u$ of (\ref{pde})-(\ref{bc-n}), we define $Q_hu$ as
$$
Q_h u = \{Q_0 u,Q_bu,Q_n(\nabla u\cdot\bn_e)\bn_e\}\in V_h.
$$

\begin{algorithm}
A numerical approximation for (\ref{pde})-(\ref{bc-n}) can be
obtained by seeking $u_h=\{u_0,\;u_b,\ u_{n}\bn_e\}\in V_h$
satisfying $u_b=Q_b g$ and $u_{n}\bn_e\cdot\bn=Q_{n}\phi$ on $\partial \Omega$
and the following equation:
\begin{equation}\label{wg}
(\Delta_w u_h,\ \Delta_w v)_{\T_h}=(f,\;v_0) \quad\forall\
v=\{v_0,\; v_b,\ v_{n}\bn_e\}\in V_h^0.
\end{equation}
\end{algorithm}

\begin{lemma}
Let $\phi\in H^2(\Omega)$, then on any $T\in\T_h$,
\begin{equation}\label{key}
\Delta_{w} \phi = \bbQ_h (\Delta \phi),
\end{equation}
where $\bbQ_h$ is a locally defined $L^2$ projections onto $P_{j}(T)$ on each element $T\in\T_h$.
\end{lemma}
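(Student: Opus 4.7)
The plan is to show that $(\Delta_w \phi, \varphi)_T$ coincides with $(\Delta \phi, \varphi)_T$ for every test polynomial $\varphi \in P_j(T)$; since this is exactly the $L^2(T)$-orthogonality condition defining $\bbQ_h(\Delta\phi)\in P_j(T)$, the identity (\ref{key}) follows.

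First I would fix the interpretation of $\Delta_w \phi$ for $\phi \in H^2(\Omega)$. The natural identification is to view $\phi$ as the weak function $\{\phi|_T,\, \phi|_{\partial T},\, (\nabla\phi\cdot\bn_e)\bn_e\}$, which makes sense because $H^2$-regularity provides well-defined traces of both $\phi$ and $\nabla\phi\cdot\bn$ on $\pT$. With this identification, one has to verify that the boundary term $\l v_n\bn_e\cdot\bn,\varphi\r_\pT$ in (\ref{wl}) equals $\l \nabla\phi\cdot\bn,\varphi\r_\pT$ regardless of the chosen orientation of $\bn_e$ in ${\cal D}_h$: since $\bn_e=\pm\bn$ on $e\subset \pT$, the scalar $v_n = \nabla\phi\cdot\bn_e$ satisfies $v_n(\bn_e\cdot\bn) = (\nabla\phi\cdot\bn_e)(\bn_e\cdot\bn) = \nabla\phi\cdot\bn$.

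Next I would apply integration by parts twice on each element $T$. For $\phi\in H^2(T)$ and $\varphi\in P_j(T)$, the first Green identity gives
\[
(\Delta\phi,\varphi)_T = -(\nabla\phi,\nabla\varphi)_T + \l \nabla\phi\cdot\bn,\varphi\r_\pT,
\]
and a second application yields
\[
-(\nabla\phi,\nabla\varphi)_T = (\phi,\Delta\varphi)_T - \l \phi,\nabla\varphi\cdot\bn\r_\pT.
\]
Combining the two identities, one obtains
\[
(\Delta\phi,\varphi)_T = (\phi,\Delta\varphi)_T - \l \phi,\nabla\varphi\cdot\bn\r_\pT + \l \nabla\phi\cdot\bn,\varphi\r_\pT,
\]
which is precisely the right-hand side of (\ref{wl}) evaluated at the weak function associated to $\phi$ (using the trace identification of the previous paragraph). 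Hence $(\Delta_w\phi,\varphi)_T = (\Delta\phi,\varphi)_T$ for all $\varphi\in P_j(T)$, and therefore $\Delta_w\phi = \bbQ_h(\Delta\phi)$ on $T$.

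The argument is essentially a bookkeeping exercise in Green's identity, so there is no deep obstacle; the only subtlety that deserves care is keeping the orientation of $\bn_e$ consistent with the outward normal $\bn$ on $\pT$ so that the last boundary term in (\ref{wl}) matches the natural flux $\nabla\phi\cdot\bn$. Once this is handled, the proof is a one-line consequence of integrating by parts twice.
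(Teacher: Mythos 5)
Your proposal is correct and follows essentially the same route as the paper: identify $\phi$ with the weak function given by its traces, use $(\nabla\phi\cdot\bn_e)(\bn_e\cdot\bn)=\nabla\phi\cdot\bn$ to handle the orientation of $\bn_e$, and apply Green's identity twice so that the right-hand side of (\ref{wl}) reduces to $(\Delta\phi,\varphi)_T=(\bbQ_h\Delta\phi,\varphi)_T$ for all $\varphi\in P_j(T)$. The only cosmetic difference is the direction of the computation (you integrate $(\Delta\phi,\varphi)_T$ by parts, while the paper expands the definition of $\Delta_w\phi$ and collapses it), which is immaterial.
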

\begin{proof}
It is not hard to see that for any $\tau\in P_j(T)$ we
have
\begin{eqnarray*}
(\Delta_{w} \phi,\ \tau)_T &=& (\phi,\ \Delta\tau)_T + \langle
(\nabla \phi\cdot\bn_e)\bn_e\cdot\bn, \;\tau\rangle_{\pT}-\langle  \phi,\ \nabla\tau\cdot\bn \rangle_{\pT}\\
&=&(\phi, \Delta\tau)_T + \langle \nabla \phi\cdot\bn,\ \tau\rangle_{\partial
T}-\langle \phi,\ \nabla\tau\cdot\bn \rangle_{\pT}\\
&=&(\Delta \phi,\ \tau)_T=(\bbQ_h\Delta \phi,\ \tau)_T,
\end{eqnarray*}
which implies
\begin{equation}\label{key-Laplacian}
\Delta_{w}  \phi = \bbQ_h (\Delta \phi).
\end{equation}
It completes the proof.
\end{proof}
\section{Well Posedness}

For any $v\in V_h+H^2(\Omega)$, let
\begin{equation}\label{3barnorm}
\3bar v\3bar^2=(\Delta_wv,\Delta_wv)_{\T_h}.
\end{equation}

We introduce a discrete $H^2$ norm as follows:
\begin{equation}\label{norm}
\|v\|_{2,h} = \left( \sum_{T\in\T_h}\left(\|\Delta v_0\|_T^2+h_T^{-3} \|  v_0-v_b\|^2_\pT+h_T^{-1}\|(\nabla v_0-v_n\bn_e)\cdot\bn\|^2_\pT\right) \right)^{\frac12}.
\end{equation}

For any function $\varphi\in H^1(T)$, the following trace
inequality holds true \cite{wy-mixed},
\begin{equation}\label{trace}
\|\varphi\|_{e}^2 \leq C \left( h_T^{-1} \|\varphi\|_T^2 + h_T
\|\nabla \varphi\|_{T}^2\right).
\end{equation}

The main goal of this section is to obtain  the equivalence of  the two norms $\|\cdot\|_{2,h}$ and $\3bar\cdot\3bar$. To do so,  we need the two following lemmas.

\begin{lemma}\label{l-m1} Let $T$ be a convex
  polygon/polyhedron of size $h_T$ with edges/faces $e_1$, $e_2$, \dots $e_n$.
  Let $\lambda_1\in P_1(T)$, $\lambda_1|{e_1}=0$ and
   $\max_T \lambda_1=1$.   Let $\lambda_i\in P_1(T)$, $i>1$,  $\lambda_i|{e_i}=0$ and
   $\lambda_i(\b m_1)=1$ where $\b m_1$ is the barycenter of $e_1$.
 For any $f\in P_k(e_1)$, there is a unique polynomial $q=\lambda_1\lambda_2^2\cdots\lambda_n^2 q_k$ for
  some $q_k\in P_k(T)$ such that
\an{\label{f11}   (q,p)_T&=0 \qquad \forall p\in P_{k-1}(T), \\
   \label{f12}   \l \nabla q\cdot \b n-f, p\r_{e_1} &=0\qquad \forall p\in P_{k}(e_1),\\
   \label{f13}   \|q\|_T &\le C h_T^{3/2} \|f\|_{e_1}, }
where $C$ depends on the minimum angle and the smallest ratio $h_{e_i}/h_T$, and $C$ is
   defined in \eqref{C1} below.
\end{lemma}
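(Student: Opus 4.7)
The plan is to reformulate (\ref{f11})--(\ref{f12}) as a square linear system in the unknown $q_k\in P_k(T)$, prove uniqueness directly using positivity of the bubble $\lambda_1\lambda_2^2\cdots\lambda_n^2$, and then obtain the estimate (\ref{f13}) by an affine scaling argument. The dimensions balance by Pascal's identity,
\[
\dim P_k(T)=\binom{k+d}{d}=\binom{k+d-1}{d}+\binom{k+d-1}{d-1}=\dim P_{k-1}(T)+\dim P_k(e_1),
\]
so it is enough to show that (\ref{f11})--(\ref{f12}) with $f\equiv 0$ admits only the trivial solution; existence for every $f$ then follows.

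For uniqueness, assume $f\equiv 0$. Since $\lambda_1=0$ on $e_1$, the normal trace collapses on that face to $\nabla q\cdot\bn|_{e_1}=c_1\,\lambda_2^2\cdots\lambda_n^2\,q_k|_{e_1}$, where $c_1:=\nabla\lambda_1\cdot\bn$ is a nonzero constant whose sign is fixed by the normalization $\max_T\lambda_1=1$. Because $T$ is convex and each $\lambda_i$ is normalized to equal $1$ at $\b m_1\in e_1$, the restrictions $\lambda_i|_{e_1}$ for $i\ge 2$ are non-negative affine functions that vanish only on lower-dimensional subfaces. Choosing the test function $p=q_k|_{e_1}\in P_k(e_1)$ in (\ref{f12}) yields
\[
c_1\int_{e_1}\lambda_2^2\cdots\lambda_n^2\,(q_k|_{e_1})^2\,ds=0,
\]
which forces $q_k|_{e_1}\equiv 0$. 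Hence $\lambda_1$ divides $q_k$ and we may write $q_k=\lambda_1 r_{k-1}$ with $r_{k-1}\in P_{k-1}(T)$, so $q=\lambda_1^2\lambda_2^2\cdots\lambda_n^2\,r_{k-1}$. Testing (\ref{f11}) with $p=r_{k-1}$ gives
\[
\int_T\lambda_1^2\lambda_2^2\cdots\lambda_n^2\,r_{k-1}^2\,dx=0,
\]
and interior positivity of the bubble on the convex set $T$ forces $r_{k-1}\equiv 0$, hence $q\equiv 0$.

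The bound (\ref{f13}) then comes from a standard affine scaling argument. Map $T$ to a reference polytope $\hat T$ of unit diameter via $x=h_T\hat x+x_0$ and set $\hat q(\hat x)=h_T^{-1}q(x)$, $\hat f(\hat x)=f(x)$. The chain rule shows that $\hat q$ solves the analogue of (\ref{f11})--(\ref{f12}) on $\hat T$ with datum $\hat f$, and on the reference element the assignment $\hat f\mapsto\hat q$ is a linear bijection between two finite-dimensional spaces, hence continuous: $\|\hat q\|_{\hat T}\le\hat C\|\hat f\|_{\hat e_1}$ with $\hat C$ depending only on the shape of $\hat T$ (i.e. the minimum angle and the ratios $h_{e_i}/h_T$). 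Transforming back using $\|\hat q\|_{\hat T}^2=h_T^{-(d+2)}\|q\|_T^2$ and $\|\hat f\|_{\hat e_1}^2=h_T^{-(d-1)}\|f\|_{e_1}^2$ produces exactly the $h_T^{3/2}$ factor.

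The main delicate point is the sign/positivity structure of the $\lambda_i$'s, both on $e_1$ and throughout $T$, since it is this that lets the single test-function choices $p=q_k|_{e_1}$ and $p=r_{k-1}$ isolate a non-negative square and collapse the two conditions into one-dimensional $L^2$-type identities. Both properties rest crucially on the convexity of $T$ together with the normalizations $\max_T\lambda_1=1$ and $\lambda_i(\b m_1)=1$; once those are in place, the remaining algebra and the scaling computation are essentially mechanical, and the constant $C$ in (\ref{f13}) inherits its dependence on the minimum angle and edge/face ratios from the reference configuration.
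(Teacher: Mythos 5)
Your uniqueness and existence argument is essentially the paper's: you test \eqref{f12} with $p=q_k|_{e_1}$ to exploit the a.e.\ positivity of $\lambda_2^2\cdots\lambda_n^2$ on $e_1$ (which does follow from convexity and the normalizations), conclude $q_k=\lambda_1 r_{k-1}$, then test \eqref{f11} with $r_{k-1}$; the dimension count via Pascal's identity making \eqref{f11}--\eqref{f12} a square system is a nice explicit touch that the paper leaves implicit. The scaling bookkeeping is also right: with $\hat q=h_T^{-1}q(h_T\hat x+x_0)$ the two conditions are preserved and the norm transformations give exactly the factor $h_T^{3/2}$, independent of $d$.

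The gap is in how you obtain the constant in \eqref{f13}. A general convex polygon/polyhedron is not affinely equivalent to a single reference element: your map only rescales $T$, so $\hat T$ still ranges over an infinite family of unit-size shapes that varies from element to element and from mesh to mesh. For each fixed $\hat T$, "linear bijection between finite-dimensional spaces, hence continuous" indeed yields some constant $\hat C(\hat T)$, but it gives no uniform bound over the admissible shape class; the assertion that $\hat C$ depends only on the minimum angle and the ratios $h_{e_i}/h_T$ is precisely the content that must be proved, and it is the entire technical burden of the lemma (the statement even promises an explicit constant, defined in \eqref{C1}). To close the gap you would need either a compactness--continuity argument (the class of unit-diameter convex polytopes with angles in $[\alpha_0,\pi-\alpha_0]$ and $h_{e_i}/h_T$ bounded below has a bounded number of faces and is compact, and the norm of the solution operator depends continuously on the vertex positions since the entries of your square linear system do, its invertibility being guaranteed by your uniqueness step), or else the explicit constant tracking the paper performs through the upper and lower bounds on the weights $\lambda_i$ in \eqref{l1}--\eqref{bound}, the extension of $\tilde q_k$ to a bounding box, and the final assembly in \eqref{C1}. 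As written, your argument would be complete for simplicial elements but leaves the key uniformity claim for polytopal elements unproved, and it cannot produce the explicit constant that the lemma statement refers to.
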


\begin{proof} We prove $q$ is uniquely defined by \eqref{f11}--\eqref{f12}.
Let $f=0$ in \eqref{f12}.  As $T$ is convex,   $\lambda_i>0$ in the interior of $e_1$ for all
  $i>1$.   Because of the positive weight,  the vanishing weighted $L^2(e_1)$ inner-product
  forces $\nabla q\cdot \b n=0$ on $e_1$:
\an{\label{g1} \l \nabla q\cdot \b n, p\r_{e_1}
    =- \frac 1{h_T} \l  \lambda_2^2\cdots\lambda_n^2 q_k, p \r_{e_1}=0
      \quad\forall p\in P_{k}(e_1). }
Thus the vanishing weighted $L^2(T)$ inner-product
  forces $q=0$ on $T$:
\an{\label{g2} (q,p)T
    = ( \lambda_1^2\cdots\lambda_n^2 q_{k-1}, p )_T=0
      \quad\forall p\in P_{k-1}(T), }
where $q_{k-1}\lambda_1=q_k$.

We find some upper bounds and lower bounds of these weight functions $\lambda_i$.

Let $e_i$, $1<i\le m$ ($m=3$ in 2D), be a neighboring edge/face of $e_1$.
Using the distance as its variable,  we have
\a{ \lambda_i|_{e_1} = \frac 2{h_{e_1}}x, }
where $x$ is the distance, along $e_1$, of the point to the intersection of $e_1$ and $e_i$.
Here in 3D,  we assume the size of $e_1$ is roughly twice the distance from the
  barycenter $\b m_1$ to the intersection edge $e_1\cap e_i$.
To avoid too many constants,  we simply assume reasonably $h_{e_i}\ge h_T/4$.
We compute the maximum as
\an{\label{l1} \max_T \lambda_i = \frac{h_{\perp e_i}(T) }{ (h_{e_1}/2)\sin \alpha_i}
      \le \frac{2 h_T }{  h_{e_1} \sin \alpha_i}
      \le \frac 8 {   \sin \alpha_i}\le \frac 8 {   \sin \alpha_0}, }
where $\pi-\alpha_i$ ($\alpha_i\ge \alpha_0>0$, and $\alpha_i\le \pi-\alpha_0$)
   is the angle between $e_1$ and $e_i$,  and $h_{\perp e_i}(T)$ is the
  maximal distance of a point on $T$ to $e_i$.
For a lower bound,  we have
\an{ \label{l12} \lambda_i|_{T_0}
        & \ge \begin{cases}  \frac {15}{16}  & \hbox{if } \alpha_i \le \pi/2,\\
            1-\frac{\sqrt d}{16 \sin \alpha_i}
          \ge \frac12  & \hbox{if } \alpha_i > \pi/2, \end{cases}
   } where $T_0$ is a square/cube at middle of $e_1$ with size $h_{e_1}/16$, cf. Figure \ref{s-ball}.
We note that other than triangles, $\alpha_i\le \pi/2$ for most other polygons.
Here in \eqref{l12},  we assumed $\sin\alpha_0 \ge \sqrt{d}/8$, where $d$ is the space
  dimension, 2 or 3.

\begin{figure}[h!]
 \begin{center} \setlength\unitlength{2pt}
\begin{picture}(120,50)(0,0)
 \put(0,0){\line(1,0){80}}\put(15,2){$e_1$}\put(15,35){$T$}
 \put(55,0){\line(0,1){10}}\put(58,4){$T_0$}\put(60,0){\circle*{2}}
 \put(75,12){\vector(-1,-1){12}} \put(76,13){$e_0$} \put(41,13){$\b m_1$}
 \put(55,10){\line(1,0){10}}               \put(48,12){\vector(1,-1){11}}
 \put(65,10){\line(0,-1){10}}
 \put(80,0){\line(1,0){40}}
 \put(120,0){\line(0,1){50}}\put(112,28){$e_2$}
 \put(0,50){\line(1,0){120}}\put(48,46){$e_4$}
 \put(0,0){\line(0,1){50}}\put(2,26){$e_3$}
 \end{picture}
\label{s-ball}
\caption{Size $h_{e_0}=h_{e_1}/16$, and $T_0$ is square/cube of size $h_{e_0}$ at $\b m_1$. }
\end{center}
\end{figure}
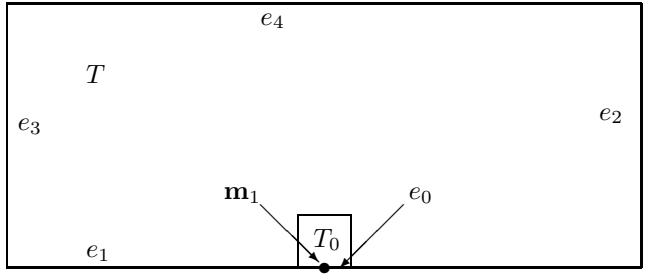

For non-neighboring edges $e_j$,  we have
\a{ \lambda_j|_{e_1} = \begin{cases} 1 & \hbox{if } e_j \parallel e_1,\\
           \frac {2(x + x_j)}{h_{e_1}+x_j}  & \hbox{otherwise, }\end{cases}
     }
where $x$ is the arc-length parametrization on $e_1$ toward the extended intersection of
  $e_1$ and $e_i$,  $x_j$ is the distance on $e_1$ from the an boundary point of $e_1$ to
   the intersection.
Supposing $e_i$ is the only edge/polygonal between $e_1$
  and $e_j$,  $x_j=h_{e_i}(\cos \alpha_i-\cos(\alpha_i+\alpha_j))$.
Because $x_j\ge 0$,  it follows that
\an{\label{l21} \max_T \lambda_j = \frac{h_{\perp e_j}(T) }{ (h_{e_1}/2)\sin \alpha_i}
      \le \frac{2 h_T }{ ( h_{e_1}+x_j) \sin (\alpha_i+\alpha_j)}
      \le \frac 8 {   \sin \alpha_0}. }
For a lower bound, because $x_j>0$  and $e_i$ is an edge/polygon in between,  we have
\an{\label{l22}  \lambda_j|_{T_0}
        & \ge  \lambda_i|_{T_0} \ge \frac 12.  }
Together, in \eqref{g1} and \eqref{g2},  we have, noting $\lambda_1|_T\le 1$,
\an{\label{bound} \lambda_2^2\cdots\lambda_n^2|_{T_0} \ge \frac{1} {2^{2n-2}},
     \quad\hbox{ and } \
   \lambda_1^2 \lambda_2^2\cdots\lambda_n^2|_T \le \frac{8^{2n-2}} {\sin^{2n-2} \alpha_0}.  }

Let  $\tilde q_k \in P_k(e_1)$ be the unique solution in  \eqref{f12},
   i.e., $\tilde q_k=q_k|_{e_1}$.
Letting $p=-h_T \tilde q_k$ in  \eqref{f12}, cf. \eqref{g1},   we get, by  \eqref{bound},
\a{  \frac{1} {16^{2k}} \frac{1} {2^{2n-2}}\|\tilde q_k\|_{e_1}^2
    & \le \frac{1} {2^{2n-2}}\|\tilde q_k\|_{e_0}^2
      \le  \| \lambda_2 \cdots\lambda_n \tilde q_k\|_{e_0}^2\\
     &\le  \| \lambda_2 \cdots\lambda_n \tilde q_k\|_{e_1}^2
        = \l  \lambda_2^2\cdots\lambda_n^2 \tilde q_k, \tilde q_k \r_{e_1}  \\
     &=-\frac{1}{h_T} \l \lambda_2^2\cdots\lambda_n^2 q_k , p \r_{e_1}
      =  \l f , p \r_{e_1}
       = \l f, -h_T \tilde q_k \r_{e_1} \\
    &\le h_T \|f\|_{e_1}\|\tilde q_k\|_{e_1}, }
where in the first step we use the fact $\tilde q_k$ is a degree $k$ polynomial.
For the unique solution $\tilde q_k \in P_k(e_1)$, we
   view it as a polynomial on the whole line or whole plane containing $e_1$.
We also extend it to $P_k(\mathbb{R}^d)$ by letting it be constant in the direction
  orthogonal to $e_1$.
Let $S_T$ be a square/cube of size $h_T$ containing $T$, with one side $S_{e_1}$ which contains
   $e_1$.
It follows that, by \eqref{bound},
\a{ \|\tilde q_k\|_T^2 & \le \|\tilde q_k\|_{S_T}^2=h_T \|\tilde q_k\|_{S_{e_1}}^2
    \le h_T (\frac {h_T}{h_{e_1}})^{2k} \|\tilde q_k\|_{ {e_1}}^2 \\
     & \le h_T 4^{2k}   \|\tilde q_k\|_{ {e_1}}^2
   \le h_T 4^{2k} ( 2^{8k+2n-2}   h_T \|f\|_{e_1})^2 \\
     & = 2^{20k +4n-4} h_T^3  \|f\|_{e_1}^2.
} We rewrite $q$ in terms of this extended polynomial,
\a{ q=\lambda_1\lambda_2^2\cdots\lambda_n^2 q_k
     =\lambda_1\lambda_2^2\cdots\lambda_n^2 (\lambda_1 q_{k-1} + \tilde q_k)
    \quad\hbox{for some } \ q_{k-1}\in P_{k-1}(T).
  } Letting $p=q_{k-1}$ in \eqref{f11},  it follows that, by \eqref{bound},
\a{ \|q_{k-1}\|_T^2 &\le (h_T/h_{e_0})^{2k-2} \|q_{k-1}\|_{T_0} ^2
          \le 64^{2k-2} 2^{2n-2} ( \lambda_2^2\cdots\lambda_n^2  q_{k-1},q_{k-1})_{T_0}\\
    &\le 2^{2n+12k-14} (2h_T/h_{e_0})^{ 2}( \lambda_1^2\cdots\lambda_n^2  q_{k-1},q_{k-1})_{T_{0,0}}
    \\ & \le 2^{2n+12k}(2h_T/h_{e_0})^{ 2n+2k-2} ( \lambda_1^2\cdots\lambda_n^2  q_{k-1},q_{k-1})_{T}
    \\&   =  2^{16n+26k-14} ( \lambda_1\lambda_2^2\cdots\lambda_n^2 \tilde q_{k},-q_{k-1})_{T}\\
    &\le  2^{16n+26k-14} \frac{8^{2n-2}} {\sin^{2n-2} \alpha_0}
    \| \tilde q_{k}\|_{T}  \|q_{k-1}\|_{T}, }
where $T_{0,0}$ is the top half of $T_0$, and we used the fact
   $\max_T \lambda_1=1$ and the fact that the integrant
   on $T_{0,0}$ is a degree $2n+2k-2$ polynomial.
We estimate
\a{ \|q\|_T^2 &= (\lambda_1^2\lambda_2^4\cdots\lambda_n^4 (\lambda_1 q_{k-1} + \tilde q_k),
   \lambda_1 q_{k-1} + \tilde q_k)_T
            \\& \le  \frac{8^{4n-4}} {\sin^{4n-4} \alpha_0} (\lambda_1 q_{k-1} + \tilde q_k,
   \lambda_1 q_{k-1} + \tilde q_k)_T
        \\& \le  \frac{8^{4n-4}} {\sin^{4n-4} \alpha_0} 2
          ( \|\lambda_1 q_{k-1}\|_T^2 + \| \tilde q_k\|_T^2 )
      \\ & \le  \frac{2^{12n-11}} {\sin^{4n-4} \alpha_0}
          ( \| q_{k-1}\|_T^2 + \| \tilde q_k\|_T^2 ). }
Combining above three bounds,  we get
\an{\label{C1}\ad{ \|q\|_T &\le
    \Big(\frac{2^{12n-11}} {\sin^{4n-4} \alpha_0}\Big)^{\frac 12}
       \Big( ( \frac{2^{22n+26k-20}} {\sin^{2n-2} \alpha_0})^2 + 1\Big)^{\frac 12}   \| \tilde q_k\|_T
            \\& \le
      \Big(\frac{2^{16n+20k-15}} {\sin^{4n-4} \alpha_0}\Big)^{\frac 12}
       \Big( ( \frac{2^{22n+26k-20}} {\sin^{2n-2} \alpha_0})^2 + 1\Big)^{\frac 12} h_T^{3/2} \|f\|_{e_1}
    =: C h_T^{3/2} \|f\|_{e_1}.
} } The proof is completed.
\end{proof}

\begin{lemma} \label{l-m2}
Let the following notations be defined in Lemma \ref{l-m1}.
 For any $g\in P_{k-1}(e_1)$, there is a unique polynomial $q=\lambda_2^2\cdots\lambda_n^2 q_{k+1}$ for
  some $q_{k+1}\in P_{k+1}(T)$ such that
\an{\label{f21}   (q,p)_T&=0 \qquad \forall p\in P_{k-1}(T), \\
   \label{f22}   \l \nabla q\cdot \b n , p\r_{e_1} &=0\qquad \forall p\in P_{k}(e_1),\\
   \label{f2-3}   \l   q-g, p\r_{e_1} &=0\qquad \forall p\in P_{k+1}(e_1),\\
   \label{f23}   \|q\|_T &\le C h_T^{1/2} \|g\|_{e_1}, }
where $C$ depends on the minimum angle and the smallest ratio $h_{e_i}/h_T$, and $C$ is
   defined in \eqref{C2} below.
\end{lemma}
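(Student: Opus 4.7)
The plan is to adapt the argument of the proof of Lemma \ref{l-m1} to accommodate the new constraint \eqref{f2-3}. Because the ansatz $q = \lambda_2^2\cdots\lambda_n^2 q_{k+1}$ no longer carries the leading $\lambda_1$ factor, $q|_{e_1}$ need not vanish; this is precisely why the extra interface condition on $q|_{e_1}$ must be imposed, and the proof now contains one additional layer compared to Lemma \ref{l-m1}. The three constraints \eqref{f2-3}, \eqref{f22}, \eqref{f21} are used in this order to peel off successively $\tilde q_{k+1}:= q_{k+1}|_{e_1}$, then $\tilde q_k := q_k|_{e_1}$ (after writing $q_{k+1} = \lambda_1 q_k + \tilde q_{k+1}^{\mathrm{ext}}$ with the extension taken constant in the direction perpendicular to $e_1$), and finally $q_{k-1}\in P_{k-1}(T)$ (after writing $q_k = \lambda_1 q_{k-1} + \tilde q_k^{\mathrm{ext}}$).

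For existence I would first verify the Pascal-identity $\dim P_{k+1}(T) = \dim P_{k-1}(T) + \dim P_k(e_1) + \dim P_{k+1}(e_1)$, which holds in both 2D and 3D, so that existence reduces to injectivity. Setting $g=0$ and testing \eqref{f2-3} with $p=\tilde q_{k+1}$ gives $\|\lambda_2\cdots\lambda_n \tilde q_{k+1}\|_{e_1}^2 = 0$, forcing $\tilde q_{k+1}=0$. A direct computation then shows $\nabla q\cdot\bn|_{e_1} = c_1\lambda_2^2\cdots\lambda_n^2\tilde q_k$ with $c_1 = \nabla\lambda_1\cdot\bn \ne 0$ (using $\lambda_1|_{e_1}=0$ and $\tilde q_{k+1}=0$), and testing \eqref{f22} with $p=\tilde q_k$ yields $\tilde q_k=0$. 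Finally \eqref{f21} with $p=q_{k-1}$ forces $q_{k-1}=0$ and hence $q=0$.

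For the quantitative bound, the same three tests are carried out with right-hand sides. Step one reproduces the argument of Lemma \ref{l-m1} almost verbatim (localise to $e_0$ and invoke \eqref{bound}) to give $\|\tilde q_{k+1}\|_{e_1}\le C\|g\|_{e_1}$, and extension to $T$ together with the scaling on $S_T$ yields $\|\tilde q_{k+1}\|_T \le C h_T^{1/2}\|g\|_{e_1}$. Step two substitutes $q_{k+1} = \lambda_1 q_k + \tilde q_{k+1}^{\mathrm{ext}}$ into \eqref{f22} tested against $\tilde q_k$; the dominant term $|c_1|\,\|\lambda_2\cdots\lambda_n\tilde q_k\|_{e_1}^2$ (with $|c_1|\sim h_T^{-1}$) absorbs by Cauchy--Schwarz the cross-terms $\sum_{i>1}\langle \lambda_i\nabla\lambda_i\cdot\bn \prod_{j>1,j\ne i}\lambda_j^2\tilde q_{k+1}, \tilde q_k\rangle_{e_1}$, which also scale like $h_T^{-1}$, by invoking the step-one bound on $\tilde q_{k+1}$; this yields $\|\tilde q_k\|_T \le C h_T^{1/2}\|g\|_{e_1}$. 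Step three substitutes $q_{k+1} = \lambda_1^2 q_{k-1} + \lambda_1 \tilde q_k^{\mathrm{ext}} + \tilde q_{k+1}^{\mathrm{ext}}$ into \eqref{f21} tested against $q_{k-1}$ and applies the $T_{0,0}$-reverse-inequality trick from the derivation of \eqref{C1} to obtain $\|q_{k-1}\|_T \le C h_T^{1/2}\|g\|_{e_1}$. Bounding $\|q\|_T$ by $(\max_T \lambda_2^2\cdots\lambda_n^2)\,\|\lambda_1^2 q_{k-1} + \lambda_1\tilde q_k^{\mathrm{ext}} + \tilde q_{k+1}^{\mathrm{ext}}\|_T$ then closes the estimate, with an explicit constant tracked exactly as in \eqref{C1}.

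The hardest step is step two: unlike in Lemma \ref{l-m1}, the absence of $\lambda_1$ from the ansatz means $\nabla q\cdot\bn|_{e_1}$ contains additional cross-terms in $\tilde q_{k+1}$ that have to be controlled quantitatively rather than merely discarded by symmetry. Making these terms harmless requires the step-one bound on $\tilde q_{k+1}$ and a careful balance of the $h_T^{-1}$ scalings coming from $|c_1|$ and from $|\nabla\lambda_i|$, so that the final power of $h_T$ is exactly $1/2$ as claimed.
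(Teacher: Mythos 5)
Your proposal is correct, and its overall architecture is the same as the paper's: the same ansatz $q=\lambda_2^2\cdots\lambda_n^2q_{k+1}$, the same order of peeling off unknowns via \eqref{f2-3}, then \eqref{f22}, then \eqref{f21}, and the same $e_0$/$T_0$ localization, normal-constant extension and $S_T$-scaling machinery borrowed from Lemma \ref{l-m1}; your explicit dimension count $\dim P_{k+1}(T)=\dim P_{k-1}(T)+\dim P_k(e_1)+\dim P_{k+1}(e_1)$ only makes precise the "uniqueness implies existence" step that the paper leaves implicit. The genuine difference is your step two. The paper asserts that \eqref{f22}, combined with $\nabla\tilde q_{k+1}\cdot\bn|_{e_1}=0$ for the extended trace, upgrades the decomposition directly to $q=\lambda_2^2\cdots\lambda_n^2(\lambda_1^2q_{k-1}+\tilde q_{k+1})$, i.e.\ it drops the middle trace $\tilde q_k$ entirely; this silently ignores exactly the cross term you identify, namely $\big(\nabla(\lambda_2^2\cdots\lambda_n^2)\cdot\bn\big)\tilde q_{k+1}$ on $e_1$, which is nonzero in general because $\nabla\lambda_i\cdot\bn\ne 0$ unless $e_i\perp e_1$, so for $g\ne 0$ one cannot conclude $\tilde q_k=0$. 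Your three-term decomposition $q_{k+1}=\lambda_1^2q_{k-1}+\lambda_1\tilde q_k^{\mathrm{ext}}+\tilde q_{k+1}^{\mathrm{ext}}$, with the absorption estimate in which the dominant term $|c_1|\,\|\lambda_2\cdots\lambda_n\tilde q_k\|_{e_1}^2$ (where $c_1=\nabla\lambda_1\cdot\bn$, $|c_1|\sim h_T^{-1}$, and the lower bound $\|\lambda_2\cdots\lambda_n\tilde q_k\|_{e_1}\gtrsim\|\tilde q_k\|_{e_1}$ comes from the $e_0$-localization behind \eqref{bound}) controls the cross terms, which scale like $h_T^{-1}\|\tilde q_{k+1}\|_{e_1}\|\tilde q_k\|_{e_1}$, handles this correctly and yields $\|\tilde q_k\|_{e_1}\le C\|g\|_{e_1}$, hence the same final bound $\|q\|_T\le Ch_T^{1/2}\|g\|_{e_1}$ as \eqref{f23}. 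What your route buys is a watertight treatment of the \eqref{f22} step at the cost of one extra intermediate bound and a somewhat larger constant, which you should track as in \eqref{C2} rather than \eqref{C1}; the uniqueness part and steps one and three coincide with the paper's argument.
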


\begin{proof}  For unisovence, letting $g=0$ in \eqref{f2-3},  we get
   $q=\lambda_1\lambda_2^2\cdots\lambda_n^2 q_{k}$ for some $q_k\in P_k(e_1)$,
   because the weights $\lambda_i>0$.
   By \eqref{f22}, $\nabla q\cdot \b n|_{e_1}=0$ and thus
   $q=\lambda_1^2\lambda_2^2\cdots\lambda_n^2 q_{k-1}$ for some $q_{k-1}\in P_{k-1}(T)$.
   By \eqref{f21}, $ q_{k-1}|_{T}=0$ and thus
   $q=0$.

The upper and lower bounds for $\lambda_i$ are same as that in Lemma \ref{l-m1}.

Let $\tilde q_{k+1}\in P_{k+1}(e_1)$ be the unique solution in \eqref{f2-3},
  i.e.,  $q|_{e_1}=\lambda_2^2\cdots\lambda_n^2 \tilde q_{k+1}$.
Letting $p=\tilde q_{k+1}$ in \eqref{f2-3}, we get, by \eqref{bound},
\a{   \frac{1} {16^{2k+2}} \frac{1} {2^{2n-2}}\|\tilde q_{k+1}\|_{e_1}^2
    & \le \frac{1} {2^{2n-2}}\|\tilde q_{k+1}\|_{e_0}^2
      \le  \| \lambda_2 \cdots\lambda_n \tilde q_{k+1}\|_{e_0}^2\\
     &\le  \| \lambda_2 \cdots\lambda_n \tilde q_{k+1}\|_{e_1}^2
        = \l  \lambda_2^2\cdots\lambda_n^2 \tilde q_{k+1}, \tilde q_{k+1} \r_{e_1}  \\
     & = \l g,  \tilde q_{k+1} \r_{e_1}  \le   \|g\|_{e_1}\|\tilde q_{k+1}\|_{e_1}, }
where in the first step we use the fact $\tilde q_{k+1}$ is a degree ${k+1}$ polynomial.
For the unique solution $\tilde q_{k+1} \in P_{k+1}(e_1)$, we
   view it as a polynomial on the whole line or whole plane containing $e_1$.
We also extend it to $P_{k+1}(\mathbb{R}^d)$ by letting it be constant in the direction
  orthogonal to $e_1$.
Let $S_T$ be a square/cube of size $h_T$ containing $T$, with one side $S_{e_1}$ which contains
   $e_1$.
It follows that, by \eqref{bound},
\a{ \|\tilde q_{k+1}\|_T^2 & \le \|\tilde q_{k+1}\|_{S_T}^2=h_T \|\tilde q_{k+1}\|_{S_{e_1}}^2
    \le h_T (\frac {h_T}{h_{e_1}})^{2k+2} \|\tilde q_{k+1}\|_{ {e_1}}^2 \\
     & \le h_T 4^{2k+2}   \|\tilde q_{k+1}\|_{ {e_1}}^2
   \le h_T 4^{2k+2} ( 2^{8k+2n+6}    \|g\|_{e_1})^2 \\
     & = 2^{20k +4n+16} h_T  \|g\|_{e_1}^2.
} We rewrite $q$ in terms of this extended polynomial,
\a{ q= \lambda_2^2\cdots\lambda_n^2 q_{k+1}
     = \lambda_2^2\cdots\lambda_n^2 (\lambda_1 q_{k } + \tilde q_{k+1})
    \quad\hbox{for some } \ q_{k }\in P_{k}(T).
  } By \eqref{f22}, we have further, because $\nabla \tilde q_{k+1}\cdot \b n |_{e_1}=0$,
\a{ q=  \lambda_2^2\cdots\lambda_n^2 (\lambda_1^2 q_{k-1 } + \tilde q_{k+1})
    \quad\hbox{for some } \ q_{k-1}\in P_{k-1}(T).  }
 Letting $p=q_{k-1}$ in \eqref{f21},  it follows that, by \eqref{bound},
\a{ \|q_{k-1}\|_T^2 &\le (h_T/h_{e_0})^{2k-2} \|q_{k-1}\|_{T_0} ^2
          \le 64^{2k-2} 2^{2n-2} ( \lambda_2^2\cdots\lambda_n^2  q_{k-1},q_{k-1})_{T_0}\\
    &\le 2^{2n+12k-14} (2h_T/h_{e_0})^{ 2}( \lambda_1^2\cdots\lambda_n^2  q_{k-1},q_{k-1})_{T_{0,0}}
    \\ & \le 2^{2n+12k}(2h_T/h_{e_0})^{ 2n+2k-2} ( \lambda_1^2\cdots\lambda_n^2  q_{k-1},q_{k-1})_{T}
    \\&   =  2^{16n+26k-14} ( \lambda_2^2\cdots\lambda_n^2 \tilde q_{k+1},-q_{k-1})_{T}\\
    &\le  2^{16n+26k-14} \frac{8^{2n-2}} {\sin^{2n-2} \alpha_0}
    \| \tilde q_{k+1}\|_{T}  \|q_{k-1}\|_{T}, }
where $T_{0,0}$ is the top half of $T_0$, and we used the fact
   $\max_T \lambda_1=1$ and the fact that the integrant
   on $T_{0,0}$ is a degree $2n+2k-2$ polynomial.
We estimate
\a{ \|q\|_T^2 &= (\lambda_2^4\cdots\lambda_n^4 (\lambda_1^2 q_{k-1} + \tilde q_{k+1}),
   \lambda_1^2 q_{k-1} + \tilde q_{k+1})_T
            \\& \le  \frac{8^{4n-4}} {\sin^{4n-4} \alpha_0} ( \lambda_1^2 q_{k-1} + \tilde q_{k+1},
    \lambda_1^2 q_{k-1} + \tilde q_{k+1})_T
        \\& \le  \frac{8^{4n-4}} {\sin^{4n-4} \alpha_0} 2
          ( \|\lambda_1^2 q_{k-1}\|_T^2 + \| \tilde q_{k+1}\|_T^2 )
      \\ & \le  \frac{2^{12n-11}} {\sin^{4n-4} \alpha_0}
          ( \| q_{k-1}\|_T^2 + \| \tilde q_{k+1}\|_T^2 ). }
Combining above three bounds,  we get
\an{\label{C2}\ad{ \|q\|_T &\le
    \Big(\frac{2^{12n-11}} {\sin^{4n-4} \alpha_0}\Big)^{\frac 12}
       \Big( ( \frac{2^{16n+26k-14}} {\sin^{2n-2} \alpha_0})^2 + 1\Big)^{\frac 12}   \| \tilde q_k\|_T
            \\& \le
      \Big(\frac{2^{16n+20k+5}} {\sin^{4n-4} \alpha_0}\Big)^{\frac 12}
       \Big( ( \frac{2^{16n+26k-14}} {\sin^{2n-2} \alpha_0})^2 + 1\Big)^{\frac 12} h_T  \|g\|_{e_1}
    =: C h_T  \|g\|_{e_1}.
} } The proof is completed.
\end{proof}

\begin{lemma} There exist two positive constants $C_1$ and $C_2$ such
that for any $v=\{v_0,v_b, v_n\bn_e\}\in V_h$, we have
\begin{equation}\label{happy}
C_1 \|v\|_{2,h}\le \3bar v\3bar \leq C_2 \|v\|_{2,h}.
\end{equation}
\end{lemma}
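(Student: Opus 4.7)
Proof plan. The inequality splits into the easy direction $|||v||| \le C_2 \|v\|_{2,h}$ and the hard direction $C_1\|v\|_{2,h}\le |||v|||$; both come from the identity obtained by applying integration by parts to the definition \eqref{wl} of the weak Laplacian: for every $\varphi \in P_j(T)$,
\begin{equation*}
(\Delta_{w} v, \varphi)_T = (\Delta v_0, \varphi)_T + \langle (v_n\bn_e - \nabla v_0)\cdot\bn, \varphi\rangle_{\partial T} + \langle v_0-v_b, \nabla\varphi\cdot\bn\rangle_{\partial T}.
\end{equation*}
I would first set up this identity once and then reuse it in both directions.

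For the upper bound, plug in $\varphi = \Delta_{w} v \in P_j(T)$, apply Cauchy--Schwarz on each term, and control the two boundary integrals with the trace inequality \eqref{trace} combined with an inverse estimate on polynomials, i.e., $\|\Delta_w v\|_{\partial T}\le C h_T^{-1/2}\|\Delta_w v\|_T$ and $\|\nabla\Delta_w v\cdot \bn\|_{\partial T}\le C h_T^{-3/2}\|\Delta_w v\|_T$. Dividing by $\|\Delta_w v\|_T$ and squaring recovers exactly the three summands of $\|v\|_{2,h}^2$, so the upper bound follows.

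The lower bound is the real work, and this is where Lemmas \ref{l-m1} and \ref{l-m2} are used. The plan is to bound the three summands of $\|v\|_{2,h}^2$ on a fixed element $T$ one at a time by testing against cleverly designed bubble-like polynomials that vanish on all faces of $\partial T$ except a chosen face $e_1$, and that are orthogonal in $T$ to $P_{k-1}(T)$.

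\textbf{Edge jump $\|v_0-v_b\|_{e_1}$.} Apply Lemma \ref{l-m1} with $f = (v_0-v_b)|_{e_1}\in P_k(e_1)$ and let $q$ be the resulting polynomial. Because $q$ vanishes on each $e_i$ and $\nabla q\cdot \bn$ vanishes on every $e_i$ with $i\ne 1$, substituting $\varphi=q$ in the identity leaves only $(\Delta v_0,q)_T+\langle v_0-v_b,\nabla q\cdot \bn\rangle_{e_1}$. The volume term drops out since $\Delta v_0\in P_{k-2}(T)\subset P_{k-1}(T)$ and $q$ is $L^2$-orthogonal to $P_{k-1}(T)$ by \eqref{f11}; the boundary term equals $\langle v_0-v_b, f\rangle_{e_1}=\|v_0-v_b\|_{e_1}^2$ by \eqref{f12}. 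Combining with \eqref{f13} and Cauchy--Schwarz yields $\|v_0-v_b\|_{e_1}\le C h_T^{3/2}\|\Delta_w v\|_T$.

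\textbf{Normal gradient jump $\|(\nabla v_0-v_n\bn_e)\cdot\bn\|_{e_1}$.} Apply Lemma \ref{l-m2} with $g=(\nabla v_0-v_n\bn_e)\cdot\bn|_{e_1}\in P_{k-1}(e_1)$ and let $q$ be the corresponding polynomial. Here $q$ vanishes on every $e_i$ with $i\ne 1$, $\nabla q\cdot\bn|_{e_i}=0$ for $i\ne 1$, $q|_{e_1}$ reproduces $g$ against $P_{k+1}(e_1)\supset P_{k-1}(e_1)$ by \eqref{f2-3}, and $\langle\nabla q\cdot\bn, v_0-v_b\rangle_{e_1}=0$ by \eqref{f22} since $v_0-v_b\in P_k(e_1)$. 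Substituting $\varphi=q$ in the identity gives $(\Delta_w v,q)_T=-\|(\nabla v_0-v_n\bn_e)\cdot\bn\|_{e_1}^2$, and \eqref{f23} with Cauchy--Schwarz yields $\|(\nabla v_0-v_n\bn_e)\cdot\bn\|_{e_1}\le C h_T^{1/2}\|\Delta_w v\|_T$.

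\textbf{Volume term $\|\Delta v_0\|_T$.} Test with $\varphi=\Delta v_0\in P_{k-2}(T)\subset P_j(T)$ in the identity to obtain $\|\Delta v_0\|_T^2$ plus two boundary integrals. Bound those with trace plus inverse estimates and the two edge estimates already derived; absorbing $\|\Delta v_0\|_T$ into the left side gives $\|\Delta v_0\|_T \le C\|\Delta_w v\|_T$.

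Summing the three edgewise and elementwise estimates over all $e\subset\partial T$ and all $T\in\T_h$ produces $\|v\|_{2,h}^2\le C|||v|||^2$, which together with the upper bound yields \eqref{happy}. The main obstacle is the careful choice of test functions so that after the integration-by-parts identity is applied, all unwanted terms cancel and only the desired edge or volume quantity survives; both Lemmas \ref{l-m1} and \ref{l-m2} are tailored precisely to make those cancellations automatic, so once they are available the rest is bookkeeping.
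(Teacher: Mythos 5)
Your proposal is correct and follows essentially the same route as the paper: the same integration-by-parts identity for $(\Delta_w v,\varphi)_T$, the same trace-plus-inverse argument with $\varphi=\Delta_w v$ for the upper bound, and the same use of Lemmas \ref{l-m1} and \ref{l-m2} to build test functions that isolate $\|v_0-v_b\|_e$ and $\|(\nabla v_0-v_n\bn_e)\cdot\bn\|_e$, followed by absorbing the boundary terms to control $\|\Delta v_0\|_T$. You merely spell out more explicitly how the lemmas' orthogonality properties produce the cancellations the paper asserts, which is consistent with its argument.
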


\begin{proof}
For any $v=\{v_0,v_b,v_n\bn_e\}\in V_h$, it follows from the definition of
weak Laplacian (\ref{wl}) and integration by parts that
\begin{eqnarray}
(\Delta_{w} v, \ \varphi)_T &=& ( v_0, \ \Delta\varphi)_T-\l v_b,\
\nabla\varphi\cdot\bn\r_\pT +\l v_n\bn_e\cdot\bn, \ \varphi\r_\pT\nonumber\\
&=&-(\nabla v_0, \ \nabla\varphi)_T+\l v_0-v_b,\
\nabla\varphi\cdot\bn\r_\pT +\l v_n\bn_e\cdot\bn, \ \varphi\r_\pT\nonumber\\
&=&(\Delta v_0, \ \varphi)_T+\l v_0-v_b,\
\nabla\varphi\cdot\bn\r_\pT +\l (v_n\bn_e-\nabla v_0)\cdot\bn, \ \varphi\r_\pT.\label{n-1}
\end{eqnarray}

By letting $\varphi=\Delta_w v$ in (\ref{n-1}) we arrive at
\begin{eqnarray*}
\|\Delta_{w} v\|^2_T &=&(\Delta v_0, \ \Delta_w v)_T+\l v_0-v_b,\
\nabla(\Delta_w v)\cdot\bn\r_\pT +\l (v_n\bn_e-\nabla v_0)\cdot\bn, \ \Delta_w v\r_\pT
\end{eqnarray*}

From the trace inequality (\ref{trace}) and the inverse inequality
we have
\begin{eqnarray*}
\|\Delta_wv\|^2_T &\le& \|\Delta v_0\|_T \|\Delta_w v\|_T+ \|v_0-v_b\|_\pT \|\nabla(\Delta_w v)\|_\pT
  \\ && \ +\|(v_n\bn_e-\nabla v_0)\cdot\bn\|_\pT \|\Delta_w v\|_\pT\\
&\le& C(\|\Delta v_0\|_T + h_T^{-3/2}\|v_0-v_b\|_\pT
  \\ && \ +h_T^{-1/2}\|(v_n\bn_e-\nabla v_0)\cdot\bn\|_\pT) \|\Delta_w v\|_T,
\end{eqnarray*}
which implies
$$
\|\Delta_w v\|_T \le C \left(\|\Delta v_0\|_T +h_T^{-3/2}\|v_0-v_b\|_\pT+h_T^{-1/2}\|(v_n\bn_e-\nabla v_0)\cdot\bn\|_\pT\right),
$$
and consequently
$$\3bar v\3bar \leq C_2 \|v\|_{2,h}.$$
Next we will prove
$$
\sum_{T\in\T_h} h_T^{-3}\|v_0-v_b\|^2_\pT\le C\3bar v\3bar^2.
$$
It follows from (\ref{n-1}) that for any $\varphi\in P_j(T)$,
\begin{eqnarray}
(\Delta_{w} v, \ \varphi)_T
&=&(\Delta v_0, \ \varphi)_T+\l v_0-v_b,\
\nabla\varphi\cdot\bn\r_\pT\nonumber\\
& +&\l (v_n\bn_e-\nabla v_0)\cdot\bn, \ \varphi\r_\pT.\label{n-11}
\end{eqnarray}
By Lemma \ref{l-m1}, there exist a $\varphi_0$ such that for $e\subset\pT$,
\begin{eqnarray}
&&(\Delta v_0,\varphi_0)_T=0,\;\;\l (v_n\bn_e-\nabla v_0)\cdot\bn, \ \varphi_0\r_\pT=0,\nonumber\\
&& \l v_0-v_b,\nabla\varphi_0\cdot\bn\r_{\pT\setminus e}=0,\;\; \l v_0-v_b,\nabla\varphi_0\cdot\bn\r_\pT=\|v_0-v_b\|_e^2.\label{cond1}
\end{eqnarray}
and
\begin{eqnarray}
\|\varphi_0\|_T\le C h_T^{3/2}\|v_0-v_b\|_e.\label{cond2}
\end{eqnarray}
Letting $\varphi=\varphi_0$ in (\ref{n-11}) yields
\begin{eqnarray}
\|v_0-v_b\|_e^2&=&(\Delta_{w} v, \ \varphi_0)_T\le \|\Delta_{w} v\|_T\|\varphi_0\|_T\le Ch_T^{3/2} \|\Delta_{w} v\|_T\|v_0-v_b\|_e,
\end{eqnarray}
which implies
\begin{equation}\label{n-22}
\sum_{T\in\T_h} h_T^{-3}\|v_0-v_b\|^2_\pT\le C\3bar v\3bar^2.
\end{equation}
Similarly, by Lemma \ref{l-m2}, we can have
\begin{equation}\label{n-33}
\sum_{T\in\T_h} h_T^{-1}\|(\nabla v_0-v_n\bn_e)\cdot\bn\|^2_\pT \le C\3bar v\3bar^2.
\end{equation}
Finally, by letting $\varphi=\Delta_w v$ in (\ref{n-11}) we arrive at
\begin{eqnarray*}
\|\Delta v_0\|^2_T &=&(\Delta v_0, \ \Delta_w v)_T-\l v_0-v_b,\
\nabla(\Delta_w v)\cdot\bn\r_\pT \\
   &&\quad \ -  \l (v_n\bn_e-\nabla v_0)\cdot\bn, \ \Delta_w v\r_\pT.
\end{eqnarray*}
Using the trace inequality (\ref{trace}), the inverse inequality and (\ref{n-22})-(\ref{n-33}), one has
\begin{eqnarray*}
\|\Delta v_0\|^2_T &\le& C \|\Delta_w v\|_T\|\Delta v_0\|_T,
\end{eqnarray*}
which gives
\begin{eqnarray*}
\sum_{T\in\T_h}\|\Delta v_0\|^2_T &\le& C \3bar v\3bar^2.
\end{eqnarray*}
We complete the proof.
\end{proof}

\smallskip

\begin{lemma}
The weak Galerkin finite element scheme (\ref{wg}) has a unique
solution.
\end{lemma}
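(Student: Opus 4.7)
The scheme (\ref{wg}) is a square linear system, so uniqueness of the solution is equivalent to existence, and both follow once we show that the homogeneous problem admits only the trivial solution. Accordingly, the plan is to assume $f=0$, $g=0$, and $\phi=0$ in (\ref{wg}), let $u_h=\{u_0,u_b,u_n\bn_e\}\in V_h^0$ be a solution, and deduce $u_h=0$.

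The first step is to test (\ref{wg}) with $v=u_h$. This immediately gives $(\Delta_w u_h,\Delta_w u_h)_{\T_h}=0$, i.e.\ $\3bar u_h\3bar=0$. By the equivalence of norms established in the previous lemma (the inequality $C_1\|u_h\|_{2,h}\le\3bar u_h\3bar$), we conclude $\|u_h\|_{2,h}=0$. Inspecting the definition of $\|\cdot\|_{2,h}$ in (\ref{norm}), each of its three nonnegative contributions vanishes, so on every $T\in\T_h$ we have
\begin{equation*}
\Delta u_0=0 \ \text{in } T, \qquad u_0=u_b \ \text{on } \partial T, \qquad \nabla u_0\cdot\bn=u_n\bn_e\cdot\bn \ \text{on }\partial T.
\end{equation*}

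Next I would run a standard integration-by-parts argument to upgrade these piecewise relations to a global identity. Multiplying $\Delta u_0=0$ by $u_0$ and integrating over each $T$ gives
\begin{equation*}
0=\sum_{T\in\T_h}(\Delta u_0,u_0)_T=-\sum_{T\in\T_h}\|\nabla u_0\|_T^2+\sum_{T\in\T_h}\langle\nabla u_0\cdot\bn,u_0\rangle_{\partial T}.
\end{equation*}
In the boundary sum I substitute $u_0=u_b$ and $\nabla u_0\cdot\bn=u_n\bn_e\cdot\bn$ to obtain $\sum_{T}\langle u_n\bn_e\cdot\bn,u_b\rangle_{\partial T}$. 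On each interior edge $e$ shared by $T_1,T_2$, the two outward normals satisfy $\bn_{T_1}=-\bn_{T_2}$, while $u_n$, $\bn_e$, $u_b$ are single-valued on $e$; the two contributions cancel. On each boundary edge, $u_b=Q_bg=0$ since $v\in V_h^0$ and $g=0$. Thus the boundary sum vanishes and $\sum_T\|\nabla u_0\|_T^2=0$, forcing $\nabla u_0=0$ on every element, so $u_0$ is piecewise constant. Continuity of $u_0$ across element interfaces (via $u_0=u_b$ with $u_b$ single-valued) together with $u_b=0$ on $\partial\Omega$ and connectedness of $\Omega$ then yields $u_0\equiv 0$. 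Consequently $u_b=u_0|_{\partial T}=0$ and $u_n\bn_e\cdot\bn=\nabla u_0\cdot\bn=0$; since $\bn=\pm\bn_e$ on each edge, $u_n=0$ as well, giving $u_h=0$.

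There is no real obstacle here: once the norm equivalence $\|\cdot\|_{2,h}\sim\3bar\cdot\3bar$ is in hand, the only item to be careful with is the bookkeeping of the fixed edge normals $\bn_e$ versus the outward element normals $\bn$ when summing interface contributions, which is handled by the observation that $\bn_e\cdot\bn_{T_1}+\bn_e\cdot\bn_{T_2}=0$ on each interior edge.
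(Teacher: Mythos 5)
Your proof is correct, and it follows the paper's argument up to the point where $\|u_h\|_{2,h}=0$ yields $\Delta u_0=0$ in each $T$, $u_0=u_b$ and $\nabla u_0\cdot\bn=u_n\bn_e\cdot\bn$ on $\partial T$. From there you diverge: the paper observes that these interface conditions make $u_0$ a globally harmonic (hence smooth) function on $\Omega$, and then invokes uniqueness of the Dirichlet problem with $u_0=u_b=0$ on $\partial\Omega$ to conclude $u_0\equiv 0$. You instead run a discrete energy argument, integrating $\Delta u_0\,u_0$ by parts elementwise and cancelling the interface terms using the single-valuedness of $u_b$, $u_n$ and the opposite outward normals, which forces $\nabla u_0=0$ elementwise and then $u_0\equiv 0$ by continuity and the boundary data. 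Your route is more elementary and self-contained: it avoids having to justify that a piecewise-polynomial function with matching traces and normal derivatives is globally harmonic (a Weyl-lemma/distributional step the paper leaves implicit) and replaces it with explicit bookkeeping of the edge normals, which you handle correctly. The paper's route is shorter on the page but leans on PDE uniqueness; yours trades that for a purely algebraic cancellation. Either is acceptable.
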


\begin{proof}
It suffices to show that the solution of (\ref{wg}) is trivial if
$f=g=\phi=0$. Take $v=u_h$  in
(\ref{wg}). It follows that
\[
(\Delta_w u_h,\Delta_w u_h)_{\T_h}=0.
\]
Then the norm equivalence (\ref{happy}) implies $\|u_h\|_{2,h}=0$. Consequently, we have $\Delta u_0=0$,
$u_0=u_b,\ \nabla u_0\cdot\bn_e=u_{n}$ on $\pT$.
Thus $u_0$ is a smooth harmonic function  on $\Omega$.
The boundary condition of $u_b=0$  implies that $u_0\equiv 0$ on
$\Omega$. We have $u_0=0$, then $u_b=u_n=0$, which completes the proof.
\end{proof}

\section{An Error Equation}

Let $e_h=u-u_h$. The goal of this section is to obtain  an error equation that $e_h$ satisfies.

\begin{lemma}
For any $v\in V_h^0$, we have
\begin{eqnarray}
(\Delta_we_h,\Delta_wv)_{\T_h}=\ell_1(u,v)+\ell_2(u,v),\label{ee}
\end{eqnarray}
where
\begin{eqnarray*}
\ell_1(u,v)&=& \langle \nabla(\bbQ_h\Delta u-\Delta u)\cdot\bn, v_0-v_b\rangle_{\pT_h},\\
\ell_2(u,v)&=& \langle \Delta u-\bbQ_h\Delta  u, (\nabla v_0-v_n\bn_e)\cdot\bn\rangle_{\pT}.
\end{eqnarray*}
\end{lemma}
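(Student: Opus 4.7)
The plan is to compute $(\Delta_w u,\Delta_w v)_{\T_h}$ and $(f,v_0)$ separately and subtract, using the scheme $(\Delta_w u_h,\Delta_w v)_{\T_h}=(f,v_0)$ together with the identity $\Delta_w u=\bbQ_h(\Delta u)$ from equation (\ref{key}). The two expressions should differ by exactly $\ell_1(u,v)+\ell_2(u,v)$.

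First, I would test the PDE $\Delta^2 u = f$ against $v_0$ element by element, using integration by parts twice to obtain
\[
(f,v_0)_T = (\Delta u,\Delta v_0)_T + \langle \nabla\Delta u\cdot\bn,v_0\rangle_{\pT} - \langle \Delta u,\nabla v_0\cdot\bn\rangle_{\pT}.
\]
Then, because $\nabla\Delta u$ and $\Delta u$ are single-valued across interior edges and $v_b|_{\partial\Omega}=0$, $v_n\bn_e\cdot\bn|_{\partial\Omega}=0$ for $v\in V_h^0$, I can insert $v_b$ and $v_n\bn_e$ for free: $\sum_T\langle\nabla\Delta u\cdot\bn,v_b\rangle_{\pT}=0$ and $\sum_T\langle\Delta u,v_n\bn_e\cdot\bn\rangle_{\pT}=0$ (normals cancel in pairs on interior edges). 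This rewrites
\[
(f,v_0)=\sum_T(\Delta u,\Delta v_0)_T+\langle\nabla\Delta u\cdot\bn,v_0-v_b\rangle_{\pT_h}-\langle\Delta u,(\nabla v_0-v_n\bn_e)\cdot\bn\rangle_{\pT_h}.
\]

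Next, using $\Delta_w u=\bbQ_h\Delta u\in P_j(T)$, I would apply the defining identity (\ref{wl}) for $\Delta_w v$ with test function $\varphi=\bbQ_h\Delta u$, and then integrate by parts on $(v_0,\Delta\varphi)_T$ (valid since $v_0,\varphi$ are polynomials on $T$) to get
\[
(\Delta_w u,\Delta_w v)_T=(\Delta v_0,\bbQ_h\Delta u)_T+\langle v_0-v_b,\nabla(\bbQ_h\Delta u)\cdot\bn\rangle_{\pT}+\langle(v_n\bn_e-\nabla v_0)\cdot\bn,\bbQ_h\Delta u\rangle_{\pT}.
\]
In the volume term I would use orthogonality of $\bbQ_h$ (since $\Delta v_0\in P_{k-2}(T)\subset P_j(T)$) to drop the projection; in the two edge terms I would write $\bbQ_h\Delta u=\Delta u-(\Delta u-\bbQ_h\Delta u)$ and $\nabla(\bbQ_h\Delta u)=\nabla\Delta u-\nabla(\Delta u-\bbQ_h\Delta u)$, producing precisely $\ell_1(u,v)$ and $\ell_2(u,v)$ as the residual pieces.

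Summing over $T$ and subtracting the rewritten $(f,v_0)$ from $(\Delta_w u,\Delta_w v)_{\T_h}$, the three "smooth" terms cancel in pairs, leaving $\ell_1+\ell_2$. Finally, $(\Delta_w e_h,\Delta_w v)_{\T_h}=(\Delta_w u,\Delta_w v)_{\T_h}-(\Delta_w u_h,\Delta_w v)_{\T_h}=(\Delta_w u,\Delta_w v)_{\T_h}-(f,v_0)$ by (\ref{wg}), which yields the claim.

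The only delicate points are bookkeeping, rather than analysis: making sure the $\pT$ insertions $v_0\mapsto v_0-v_b$ and $\nabla v_0\mapsto\nabla v_0-v_n\bn_e$ are legitimate uses the global single-valuedness of $v_b$ and $v_n\bn_e$ combined with continuity of $\nabla\Delta u$ and $\Delta u$ across interior edges and the homogeneous boundary conditions encoded in $V_h^0$. I expect this cancellation step to be the one that requires the most care, but it is the same argument already used (in the reverse direction) in the proof of (\ref{key}).
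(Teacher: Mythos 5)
Your proposal is correct and follows essentially the same route as the paper: test $\Delta^2u=f$ against $v_0$, integrate by parts twice, insert $v_b$ and $v_n\bn_e$ using single-valuedness across interior edges and the homogeneous traces in $V_h^0$, then use $\Delta_w u=\bbQ_h\Delta u$ together with the defining identity of $\Delta_w v$ (with $\varphi=\bbQ_h\Delta u$) and the orthogonality of $\bbQ_h$ to convert $(\Delta u,\Delta v_0)_{\T_h}$ into $(\Delta_w u,\Delta_w v)_{\T_h}$ plus exactly the $\ell_1,\ell_2$ residuals. The only difference is cosmetic — you run the weak-Laplacian identity from $(\Delta_w u,\Delta_w v)$ toward $(\Delta u,\Delta v_0)$ rather than the reverse — and your sign bookkeeping checks out.
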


\begin{proof}
For $v=\{v_0,v_b,v_n\bn_e\}\in V_h^0$, testing (\ref{pde}) by  $v_0$  and using the fact that
$\sum_{T\in\T_h}\langle \nabla (\Delta u)\cdot\bn, v_b\rangle_\pT=0$ and $\sum_{T\in\T_h}\langle \Delta u, v_n\bn_e\cdot\bn\rangle_\pT=0$ and integration by parts,  we arrive at
\begin{eqnarray}
(f,v_0)&=&(\Delta^2u, v_0)_{\T_h}\nonumber\\
&=&(\Delta u,\Delta v_0)_{\T_h} -\langle \Delta u,
\nabla v_0\cdot\bn\rangle_{\pT_h} + \langle\nabla(\Delta u)\cdot\bn,
v_0\rangle_{\pT_h}\label{m1}\\
&=&(\Delta u,\Delta v_0)_{\T_h} -\langle \Delta u,
(\nabla v_0-v_n\bn_e)\cdot\bn\rangle_{\pT_h}\nonumber\\
& +& \langle\nabla(\Delta u)\cdot\bn,
v_0-v_b\rangle_{\pT_h}.\nonumber
\end{eqnarray}
Next we investigate the term  $(\Delta u,\Delta v_0)_{\T_h}$ in the above equation. Using (\ref{key}), integration by parts and the definition of weak Laplacian, we have
\begin{eqnarray*}
& & (\Delta u, \Delta v_0)_{\T_h}=(\bbQ_h\Delta u, \Delta v_0)_{\T_h} \\
&=& (v_0, \Delta(\bbQ_h\Delta u))_{\T_h} + \langle \nabla v_0\cdot\bn,\
    \bbQ_h\Delta u\rangle_{\pT_h}-\langle v_0, \nabla(\bbQ_h\Delta u)\cdot\bn \rangle_{\pT_h}\nonumber\\
&=&(\Delta_w v,\ \bbQ_h\Delta u)_{\T_h}-\langle v_0-v_b, \nabla(\bbQ_h\Delta u)\cdot\bn\rangle_{\pT_h}
  \\ &&\quad +\langle (\nabla v_0-v_n\bn_e)\cdot\bn, \bbQ_h\Delta
u\rangle_{\pT_h}\nonumber\\
&=&(\Delta_w u,\ \Delta_w v)_{\T_h}- \langle v_0-v_b, \nabla(\bbQ_h\Delta u)\cdot\bn\rangle_{\pT_T}
  \\ &&\quad +\langle (\nabla v_0-v_n\bn_e)\cdot\bn, \bbQ_h\Delta
u\rangle_{\pT_h}.
\end{eqnarray*}
Combining  the above equation with (\ref{m1}) gives
\begin{eqnarray}
(f,v_0)&=&(\Delta^2u, v_0)_{\T_h}\nonumber\\
&=&(\Delta_w u,\ \Delta_w v)_{\T_h}-\langle v_0-v_b, \nabla(\bbQ_h\Delta u-\Delta u)\cdot\bn\rangle_{\pT_h}\nonumber\\
&-&\langle (\nabla v_0-v_n\bn_e)\cdot\bn, \Delta u-\bbQ_h\Delta u \rangle_{\pT}.\label{mmmm}
\end{eqnarray}
which implies that
\begin{eqnarray*}
(\Delta_w u,\ \Delta_w v)_{\T_h}=(f,v_0)+\ell_1(u,v)+\ell_2(u,v).
\end{eqnarray*}
The error equation follows from subtracting (\ref{wg}) from the above equation,
\begin{eqnarray*}
(\Delta_w e_h,\ \Delta_w v)_{\T_h}=\ell_1(u,v)+\ell_2(u,v).
\end{eqnarray*}
We have proved the lemma.
\end{proof}

\section{An Error Estimate in $H^2$}

We will obtain the optimal convergence rate for the solution $u_h$ of the stabilizer free WG method in (\ref{wg}) in a discrete $H^2$ norm.

\begin{lemma}\label{l2}
Let $k\ge 2$ and $w\in H^{\max\{k+1,4\}}(\Omega)$. There exists a constant $C$ such that the following estimates hold true:
\begin{eqnarray}
&  \left(\sum_{T\in\T_h} h_T\|\Delta w-\bbQ_h\Delta w\|_{\partial
T}^2\right)^{\frac12}
\leq C h^{k-1}\|w\|_{k+1},\label{mmm1}\\
&  \left(\sum_{T\in\T_h} h_T^3\|\nabla(\Delta w-\bbQ_h\Delta
w)\|_\pT^2\right)^{\frac12} \leq Ch^{k-1}(\|w\|_{k+1}
+h\delta_{k,2}\|w\|_4).
\label{mmm2}
\end{eqnarray}
Here $\delta_{i,j}$ is the usual Kronecker's delta with value $1$
when $i=j$ and value $0$ otherwise.
\end{lemma}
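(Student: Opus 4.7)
The plan is to prove both inequalities by applying the trace inequality \eqref{trace} to the projection error $\psi:=\Delta w-\bbQ_h\Delta w$ and combining it with the standard $L^2$-projection approximation on each $T\in\T_h$, recalling that $\bbQ_h$ projects onto $P_j(T)$ with $j\geq k+1$ and that $\|\Delta w\|_{k-1,T}\le C\|w\|_{k+1,T}$.

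For \eqref{mmm1}, I apply \eqref{trace} to $\psi$ and multiply through by $h_T$:
\[
h_T\|\psi\|_{\partial T}^2 \;\leq\; C\bigl(\|\psi\|_T^2+h_T^2\|\nabla\psi\|_T^2\bigr).
\]
Since $w\in H^{k+1}(\Omega)$ gives $\Delta w\in H^{k-1}(T)$ and $j\geq k+1$, the standard shape-regular projection estimate yields $\|\psi\|_T\leq Ch_T^{k-1}\|w\|_{k+1,T}$ and $\|\nabla\psi\|_T\leq Ch_T^{k-2}\|w\|_{k+1,T}$. Both terms contribute $Ch_T^{2(k-1)}\|w\|_{k+1,T}^2$; summing over $T$ and taking the square root gives \eqref{mmm1}.

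For \eqref{mmm2}, I apply \eqref{trace} to $\nabla\psi$ and scale by $h_T^3$:
\[
h_T^3\|\nabla\psi\|_{\partial T}^2 \;\leq\; C\bigl(h_T^2\|\nabla\psi\|_T^2+h_T^4\|D^2\psi\|_T^2\bigr).
\]
The first summand is handled by the Step 1 gradient bound and contributes $Ch_T^{2(k-1)}\|w\|_{k+1,T}^2$. The second summand requires a case split. For $k\geq 3$, $\Delta w\in H^{k-1}(T)$ with $k-1\geq 2$, and the projection delivers $\|D^2\psi\|_T\leq Ch_T^{k-3}\|w\|_{k+1,T}$; when $k=3$ the $h_T^{k-3}$ factor is $1$, but this is absorbed because $h_T^4=h_T^{2(k-1)}$ in that case. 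When $k=2$, however, $w\in H^3$ does not control $D^2\Delta w$ in $L^2$, so I invoke the additional hypothesis $w\in H^{\max\{k+1,4\}}=H^4(\Omega)$: this gives $\Delta w\in H^2(T)$, and since $j\geq 3\geq 2$ the standard projection yields the decay-free bound $\|D^2\psi\|_T\leq C\|w\|_{4,T}$. This produces $h_T^4\|w\|_{4,T}^2=h_T^{2(k-1)}\bigl(h_T\|w\|_{4,T}\bigr)^2$, which after summation is exactly the $h\delta_{k,2}\|w\|_4$ addendum.

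The delicate point is the $k=2$ case of \eqref{mmm2}: the absence of any gain in $\|D^2\psi\|_T$ is precisely what forces the Kronecker-delta term in the right-hand side and explains why the hypothesis reads $H^{\max\{k+1,4\}}$ rather than $H^{k+1}$. Everything else is the standard trace-plus-projection bookkeeping.
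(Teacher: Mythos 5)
Your proof is correct and follows exactly the route the paper indicates for this lemma (which it does not write out, citing the trace inequality \eqref{trace}, the approximation properties of the $L^2$ projection $\bbQ_h$, and \cite{wg-bi1}): trace inequality on $\Delta w-\bbQ_h\Delta w$ and on its gradient, combined with the standard projection estimates, with the $k=2$ case of \eqref{mmm2} producing the $h\delta_{k,2}\|w\|_4$ term. Your bookkeeping of the exponents, including the $k=3$ absorption and the decay-free $\|D^2\psi\|_T$ bound for $k=2$, is accurate, so nothing further is needed.
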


The above lemma can be proved by using the trace inequality (\ref{trace}) and the definition of $\bbQ_h$. The proof can also be found in \cite{wg-bi1}.

\begin{lemma} Let $w\in H^{\max\{k+1,4\}}(\Omega)$ for $k\ge 2$ and $v\in V_h$.
There exists a constant $C$ such that
\begin{eqnarray}
|\ell_1(w, v)|&\le&  C h^{k-1}\left(\|w\|_{k+1} + h\delta_{k,2} \|w\|_4\right)\3bar v\3bar.\label{mm1}\\
|\ell_2(w, v)|&\le& Ch^{k-1}\|w\|_{k+1}\3bar v\3bar.\label{mm2}
\end{eqnarray}
\end{lemma}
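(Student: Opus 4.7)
The plan is to bound each boundary term by a single Cauchy--Schwarz on $\pT$, splitting the trivial factor $1 = h_T^{s}\cdot h_T^{-s}$ so that the factor involving $w$ matches one of the weighted seminorms of Lemma \ref{l2}, while the factor involving $v$ matches one of the three pieces of the discrete norm $\|v\|_{2,h}$ in (\ref{norm}). Then the norm equivalence (\ref{happy}), namely $\|v\|_{2,h}\le C_1^{-1}\3bar v\3bar$, closes the estimate.

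For $\ell_1$, the natural split uses $s=3/2$. Cauchy--Schwarz edge-by-edge followed by the discrete Cauchy--Schwarz over $\T_h$ gives
\begin{eqnarray*}
|\ell_1(w,v)| &\le& \sum_{T\in\T_h}\|\nabla(\bbQ_h\Delta w-\Delta w)\cdot\bn\|_\pT\,\|v_0-v_b\|_\pT\\
&\le& \left(\sum_{T\in\T_h} h_T^3\|\nabla(\bbQ_h\Delta w-\Delta w)\|_\pT^2\right)^{1/2}\left(\sum_{T\in\T_h} h_T^{-3}\|v_0-v_b\|_\pT^2\right)^{1/2}.
\end{eqnarray*}
The first factor is controlled by (\ref{mmm2}), contributing $Ch^{k-1}(\|w\|_{k+1}+h\delta_{k,2}\|w\|_4)$, and the second factor is at most $\|v\|_{2,h}$ by the very definition (\ref{norm}). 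Applying (\ref{happy}) yields (\ref{mm1}).

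For $\ell_2$, the correct split is $s=1/2$, pairing the $\|\Delta w-\bbQ_h\Delta w\|_\pT$ term (no extra derivative) with $\|(\nabla v_0-v_n\bn_e)\cdot\bn\|_\pT$:
\begin{eqnarray*}
|\ell_2(w,v)|\le \left(\sum_{T\in\T_h} h_T\|\Delta w-\bbQ_h\Delta w\|_\pT^2\right)^{1/2}\left(\sum_{T\in\T_h} h_T^{-1}\|(\nabla v_0-v_n\bn_e)\cdot\bn\|_\pT^2\right)^{1/2}.
\end{eqnarray*}
Now (\ref{mmm1}) bounds the first factor by $Ch^{k-1}\|w\|_{k+1}$ (with no $\delta_{k,2}$ correction, since no derivative of the projection error is taken), and the second factor is again absorbed into $\|v\|_{2,h}\le C\3bar v\3bar$, giving (\ref{mm2}).

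The proof is essentially bookkeeping: the exponents of $h_T$ in (\ref{norm}) were chosen precisely so that $h_T^{-3/2}\|v_0-v_b\|_\pT$ and $h_T^{-1/2}\|(\nabla v_0-v_n\bn_e)\cdot\bn\|_\pT$ appear as standalone summands of $\|v\|_{2,h}^2$, which is exactly what pops out of the Cauchy--Schwarz split matched to (\ref{mmm1})--(\ref{mmm2}). There is no genuine obstacle here; the nontrivial work has already been done in Lemma \ref{l2} (standard polynomial projection/trace estimates) and in the norm equivalence proved via Lemmas \ref{l-m1} and \ref{l-m2}.
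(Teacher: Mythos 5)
Your proof is correct and follows essentially the same route as the paper: a weighted Cauchy--Schwarz on each of $\ell_1$ and $\ell_2$ with the splits $h_T^{3/2}\cdot h_T^{-3/2}$ and $h_T^{1/2}\cdot h_T^{-1/2}$, followed by the projection estimates (\ref{mmm1})--(\ref{mmm2}) and the norm equivalence (\ref{happy}). Your observation that the $\delta_{k,2}$ correction appears only in the $\ell_1$ bound, because only there is a derivative of the projection error taken, matches the paper exactly.
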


\begin{proof}
Using the Cauchy-Schwartz inequality, (\ref{mmm1})-(\ref{mmm2}) and (\ref{happy}), we have
\begin{eqnarray}
\ell_1(w,v)&=&\left|\sum_{T\in\T_h}\langle \nabla(\Delta w-\bbQ_h\Delta
w)\cdot\bn, v_0-v_b\rangle_\pT\right| \nonumber\\
\le && \left(\sum_{T\in\T_h}h_T^3\|\nabla(\Delta w-\bbQ_h\Delta
w)\|_\pT^2\right)^{\frac12}
\left(\sum_{T\in\T_h}h_T^{-3}\|v_0-v_b\|^2_{\pT}\right)^{\frac12}\nonumber\\
\le && C h^{k-1}(\|w\|_{k+1} + h\delta_{k,2} \|w\|_4) \3barv\3bar,\label{ell-1}
\end{eqnarray}
and
\begin{eqnarray}
\ell_2(w,v)&=&\left|\sum_{T\in\T_h} \langle \Delta w-\bbQ_h\Delta w, (\nabla
v_0-v_n\bn_e)\cdot\bn\rangle_\pT\right|\nonumber\\
\le &&\left(\sum_{T\in\T_h} h_T\|\Delta w-\bbQ_h\Delta
w\|_\pT^2\right)^{\frac12} \left(\sum_{T\in\T_h} h_T^{-1}
\|(\nabla v_0-v_n\bn_e)\cdot\bn\|_\pT^2\right)^{\frac12}\nonumber\\
\le && C h^{k-1}\|w\|_{k+1} \3bar v\3bar.\label{ell-2}
\end{eqnarray}
We have completed the proof.
\end{proof}

\begin{lemma}
Let  $w\in H^{\max\{k+1,4\}}(\Omega)$,  then
\begin{equation}\label{eee2}
\3bar w-Q_hw\3bar\le Ch^{k-1}\|w\|_{k+1}.
\end{equation}
\end{lemma}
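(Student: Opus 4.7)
The plan is to apply the upper half of the norm equivalence (\ref{happy}) to $w-Q_hw$, interpreted as the formal weak function with components
\[
\{\,w-Q_0w,\ w-Q_bw,\ (\nabla w\cdot\bn_e-Q_n(\nabla w\cdot\bn_e))\bn_e\,\},
\]
and then estimate the resulting discrete $H^2$-type quantity by standard $L^2$-projection error bounds.

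First I would observe that the proof of $\3bar v\3bar\le C_2\|v\|_{2,h}$ in the previous lemma never exploits $v\in V_h$: it relies only on the identity (\ref{n-1}), Cauchy--Schwarz, the trace inequality (\ref{trace}), and the inverse inequality applied to $\Delta_w v\in P_j(T)$. Since $w\in H^{\max\{k+1,4\}}(\Omega)$ is more than smooth enough to plug into (\ref{n-1}) with $v_0=w-Q_0w$, $v_b=w-Q_bw$, $v_n=\nabla w\cdot\bn_e-Q_n(\nabla w\cdot\bn_e)$, the same chain of inequalities gives
\[
\3bar w-Q_hw\3bar^2 \le C\sum_{T\in\T_h}\Big(\|\Delta(w-Q_0w)\|_T^2 + h_T^{-3}\|Q_bw-Q_0w\|_{\pT}^2 + A_T\Big),
\]
where
\[
A_T := h_T^{-1}\,\|(\nabla(w-Q_0w)-(\nabla w\cdot\bn_e-Q_n(\nabla w\cdot\bn_e))\bn_e)\cdot\bn\|_{\pT}^2,
\]
using $(w-Q_0w)-(w-Q_bw)=Q_bw-Q_0w$ in the jump term.

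Next, each contribution reduces to a routine projection estimate. The volume term satisfies $\|\Delta(w-Q_0w)\|_T\le Ch_T^{k-1}\|w\|_{k+1,T}$ from the order-$k$ approximation property of $Q_0$. For the jump term, both $\|w-Q_0w\|_\pT$ and $\|w-Q_bw\|_\pT$ are $O(h_T^{k+1/2}\|w\|_{k+1,T})$ via (\ref{trace}) (noting $Q_b$ is no worse than $Q_0|_e$ as an $L^2(e)$ approximation in $P_k(e)$), so $h_T^{-3/2}\|Q_bw-Q_0w\|_\pT=O(h_T^{k-1}\|w\|_{k+1,T})$. For the normal term, using $\bn=\pm\bn_e$ on each $e$ the integrand collapses algebraically to $\pm(\nabla Q_0w\cdot\bn_e-Q_n(\nabla w\cdot\bn_e))$; adding and subtracting $\nabla w\cdot\bn_e$ and then bounding $\|\nabla(w-Q_0w)\cdot\bn_e\|_e$ together with $\|\nabla w\cdot\bn_e-Q_n(\nabla w\cdot\bn_e)\|_e$ gives $O(h_T^{k-1/2}\|w\|_{k+1,T})$, so after the $h_T^{-1/2}$ weight the contribution is again $O(h_T^{k-1}\|w\|_{k+1,T})$. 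Summing over $T\in\T_h$ yields the claim.

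The only mildly delicate point is justifying that the upper half of the norm equivalence really extends to $w-Q_hw$, which does not lie in $V_h$; this is legitimate because (\ref{n-1}) is a pure integration-by-parts identity valid for any piecewise-smooth weak triple. The remainder is bookkeeping with standard Bramble--Hilbert / trace estimates, together with the $\bn\cdot\bn_e=\pm1$ reduction that converts the vector normal term into a comparison of the two scalar projections $Q_n(\nabla w\cdot\bn_e)$ and $\nabla Q_0w\cdot\bn_e$.
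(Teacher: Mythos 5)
Your proof is correct and, at bottom, performs the same computation as the paper, but it is packaged differently. The paper does not invoke the norm equivalence \eqref{happy} here at all: it takes $\varphi=\Delta_w(w-Q_hw)$ directly in the defining identity \eqref{wl} \emph{before} integration by parts, so the volume term appears as $(w-Q_0w,\Delta\varphi)_T$ and is handled with the inverse estimate $\|\Delta\varphi\|_T\le Ch_T^{-2}\|\varphi\|_T$ together with only the $L^2$ error $\|w-Q_0w\|_T=O(h_T^{k+1})$; likewise the boundary terms are bounded using $\|w-Q_bw\|_{\pT}$ and $\|\nabla w\cdot\bn_e-Q_n(\nabla w\cdot\bn_e)\|_{\pT}$ as they stand. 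Your route instead passes through the integrated-by-parts identity \eqref{n-1} and the upper half of \eqref{happy}; this is legitimate, and you are right that that bound uses nothing about $v$ beyond elementwise $H^2$ regularity of $v_0$, but it obliges you to estimate $\|\Delta(w-Q_0w)\|_T$ and to untangle the normal term into $\nabla Q_0w\cdot\bn_e-Q_n(\nabla w\cdot\bn_e)$. Both arguments yield $O(h^{k-1})$ from the same trace/inverse/projection toolkit; the paper's version is marginally leaner because it needs no second-derivative approximation property of $Q_0$ and sidesteps the (correctly flagged) question of extending \eqref{happy} beyond $V_h$, while yours has the small advantage of reusing an already-proved lemma rather than redoing the expansion from scratch.
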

\begin{proof}
For any $T\in\T_h$, it follows from (\ref{wl}), integration  by parts, (\ref{trace}) and inverse inequality,
\begin{eqnarray*} &&
\|\Delta_w(w-Q_hw)\|_T^2 \\
 &=&(\Delta_w(w-Q_hw), \Delta_w(w-Q_hw))_{T}\\
 &=&(w-Q_0w, \Delta (\Delta_w(w-Q_hw)))_{T}-\l w-Q_bw, \nabla (\Delta_w(w-Q_hw))\cdot\bn\r_\pT\\
& &\ + \l (\nabla w\cdot\bn_e-Q_n(\nabla w\cdot\bn_e)\cdot\bn, \Delta_w(w-Q_hw)\r_{\pT}\\
&\le&C(h_T^{-2}\|w-Q_0w\|_T+h_T^{-3/2}\|w-Q_bw\|_\pT\\
& &\ + h_T^{-1/2}\|\nabla w\cdot\bn_e-Q_n(\nabla w\cdot\bn_e)\|_\pT)\|\Delta_w(w-Q_hw)\|_T\\
&\le& Ch^{k-1}|w|_{k+1, T}\|\Delta_w(w-Q_hw)\|_T.
\end{eqnarray*}
Using the above inequality and taking the summation of it over $T$,  we derive (\ref{eee2}) and prove the lemma.
\end{proof}

\begin{theorem} Let $u_h\in V_h$  be the weak Galerkin finite element solution arising from
(\ref{wg}). Assume that the exact solution $u\in H^{\max\{k+1,4\}}(\Omega)$. Then, there
exists a constant $C$ such that
\begin{equation}\label{err1}
\3bar u-u_h\3bar \le Ch^{k-1}\left(\|u\|_{k+1}+h\delta_{k,2}\|u\|_{4}\right).
\end{equation}
\end{theorem}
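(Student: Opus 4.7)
The plan is to split the error through the projection $Q_h u$ and use the error equation together with the bounds already established on $\ell_1$, $\ell_2$ and $\3bar u - Q_h u\3bar$. Specifically, write
\[
u - u_h = (u - Q_h u) + (Q_h u - u_h) =: \eta_h + \xi_h,
\]
and observe that $\xi_h \in V_h^0$: the boundary components $u_b$ and $u_n\bn_e\cdot\bn$ of $u_h$ are defined exactly as $Q_b g$ and $Q_n\phi$, which coincide with the corresponding boundary components of $Q_h u$ because $u|_{\partial\Omega}=g$ and $\nabla u\cdot\bn|_{\partial\Omega}=\phi$. Hence $\xi_h$ is an admissible test function.

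Next I would test the error equation \eqref{ee} with $v=\xi_h$. Splitting $e_h=\eta_h+\xi_h$ on the left and rearranging gives
\[
\3bar \xi_h \3bar^2 = -(\Delta_w \eta_h,\Delta_w \xi_h)_{\T_h} + \ell_1(u,\xi_h) + \ell_2(u,\xi_h).
\]
Each term on the right is controlled by a factor of $\3bar\xi_h\3bar$ times an $O(h^{k-1})$ bound: the Cauchy--Schwarz inequality together with \eqref{eee2} handles the projection term, and \eqref{mm1}--\eqref{mm2} handle the consistency terms. Dividing through by $\3bar\xi_h\3bar$ (trivially true if $\3bar\xi_h\3bar=0$) yields
\[
\3bar \xi_h \3bar \le C h^{k-1}\bigl( \|u\|_{k+1} + h\delta_{k,2}\|u\|_4 \bigr).
\]

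Finally, a triangle inequality combined with \eqref{eee2} delivers
\[
\3bar u - u_h \3bar \le \3bar \eta_h \3bar + \3bar \xi_h \3bar \le Ch^{k-1}\bigl(\|u\|_{k+1} + h\delta_{k,2}\|u\|_4\bigr),
\]
which is the claimed estimate. There is no serious obstacle here: everything reduces to invoking results already proved in the preceding lemmas. The only point requiring care is verifying that $\xi_h \in V_h^0$, i.e., that the boundary conditions on $u_h$ and the boundary components of $Q_h u$ agree exactly, which follows directly from the definition of $Q_h u$ and the imposed Dirichlet/Neumann data.
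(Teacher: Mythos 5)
Your proposal is correct and takes essentially the same route as the paper: the same splitting $u-u_h=(u-Q_hu)+(Q_hu-u_h)$, the same use of the error equation \eqref{ee} tested with $Q_hu-u_h\in V_h^0$, and the same ingredients \eqref{mm1}--\eqref{mm2} and \eqref{eee2}. The only difference is bookkeeping: you bound $\3bar Q_hu-u_h\3bar$ directly and finish with a triangle inequality, whereas the paper estimates $\3bar u-u_h\3bar^2$ and absorbs $\tfrac14\3bar u-u_h\3bar^2$ terms via Young's inequality; the two arguments are equivalent.
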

\begin{proof}
Let $\epsilon_h=Q_hu-u_h\in V_h^0$. It is straightforward to obtain
\begin{eqnarray}
\3bar e_h\3bar^2&=&(\Delta_we_h, \Delta_we_h)_{\T_h}\label{eee1}\\
&=&(\Delta_we_h, \Delta_w(u-u_h))_{\T_h}\nonumber\\
&=&(\Delta_we_h,\Delta_w(Q_hu-u_h))_{\T_h}+(\Delta_we_h, \Delta_w(u-Q_hu))_{\T_h}\nonumber\\
&=&(\Delta_we_h,\Delta_w\epsilon_h)_{\T_h}+(\Delta_we_h, \Delta_w(u-Q_hu))_{\T_h}.\nonumber
\end{eqnarray}
Next, we bound the two terms on the right hand side  in (\ref{eee1}).
Letting $v=\epsilon_h\in V_h^0$ in (\ref{ee})  and using (\ref{mm1})-(\ref{mm2}) and (\ref{eee2}), we have
\begin{eqnarray}
|(\Delta_we_h,\Delta_w\epsilon_h)_{\T_h}|&\le&|\ell_1(u,\epsilon_h)|+|\ell_2(u,\epsilon_h)|\nonumber\\
&\le& Ch^{k-1}(\|u\|_{k+1}+h\delta_{k,2}\|u\|_{4})\3bar \epsilon_h\3bar\nonumber\\
&\le& Ch^{k-1}(\|u\|_{k+1}+h\delta_{k,2}\|u\|_{4})(\3bar u-Q_hu\3bar+\3bar u-u_h\3bar)\nonumber\\
&\le& Ch^{2(k-1)}(\|u\|^2_{k+1}+h^2\delta_{k,2}\|u\|^2_{4})+\frac14 \3bare_h\3bar^2.\label{eee3}
\end{eqnarray}
The estimate (\ref{eee2}) implies
\begin{eqnarray}
|(\Delta_we_h, \Delta_w(u-Q_hu))_{\T_h}|&\le& C\3bar u-Q_hu\3bar \3bar e_h\3bar\nonumber\\
&\le& Ch^{2(k-1)}\|u\|^2_{k+1}+\frac14\3bar e_h\3bar^2.\label{eee4}
\end{eqnarray}
Combining the estimates (\ref{eee3}) and  (\ref{eee4}) with (\ref{eee1}), we arrive
\[
\3bar e_h\3bar \le Ch^{k-1}\left(\|u\|_{k+1}+h\delta_{k,2}\|u\|_{4}\right),
\]
which completes the proof.
\end{proof}

\section{Error Estimates in $L^2$ Norm}

In this section, we will provide an estimate for the
$L^2$ norm of the WG solution $u_h$.

Recall that $e_h=u-u_h$ and $\epsilon_h=Q_hu-u_h=\{\epsilon_0, \epsilon_b,\epsilon_n\bn_e\}\in V_h^0$.

Let us consider the following dual problem
\begin{eqnarray}
\Delta^2w&=& \epsilon_0\quad
\mbox{in}\;\Omega,\label{dual}\\
w&=&0\quad\mbox{on}\;\partial\Omega,\label{dual1}\\
\nabla w\cdot\bn&=&0\quad\mbox{on}\;\partial\Omega.\label{dual2}
\end{eqnarray}
The $H^{4}$ regularity assumption of the dual problem implies the
existence of a constant $C$ such that
\begin{equation}\label{reg}
\|w\|_4\le C\|\epsilon_0\|.
\end{equation}

\begin{theorem}
Let $u_h\in V_h$ be the weak Galerkin finite element solution
arising from (\ref{wg}). Assume that the exact solution $u\in H^{k+1}(\Omega)$ and (\ref{reg}) holds true.
 Then, there exists a constant $C$ such that
\begin{equation}\label{err2}
\|Q_0u-u_0\| \le Ch^{k+1-\delta_{k,2}}(\|u\|_{k+1}+ h\delta_{k,2}\|u\|_4).
\end{equation}
\end{theorem}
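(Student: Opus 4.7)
The plan is a standard Aubin--Nitsche duality argument. I would first apply (\ref{mmmm}) to the dual solution $w\in H^4\cap H_0^2$ (whose existence with $\|w\|_4\le C\|\epsilon_0\|$ is guaranteed by (\ref{reg})), tested against $v=\epsilon_h\in V_h^0$, to obtain
\a{
\|\epsilon_0\|^2 = (\Delta_w w,\Delta_w\epsilon_h)_{\T_h} - \ell_1(w,\epsilon_h) - \ell_2(w,\epsilon_h).
}
Since $w$ and $\nabla w\cdot\bn$ vanish on $\partial\Omega$, the projection $Q_hw$ lies in $V_h^0$, so I can test the error equation (\ref{ee}) with $v=Q_hw$, yielding $(\Delta_we_h,\Delta_wQ_hw)_{\T_h}=\ell_1(u,Q_hw)+\ell_2(u,Q_hw)$. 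Using $\epsilon_h=e_h-(u-Q_hu)$ and $\Delta_ww=\Delta_wQ_hw+\Delta_w(w-Q_hw)$ combines the two identities into
\a{
\|\epsilon_0\|^2 &= \ell_1(u,Q_hw)+\ell_2(u,Q_hw)-\ell_1(w,\epsilon_h)-\ell_2(w,\epsilon_h)\\
&\quad+(\Delta_w(w-Q_hw),\Delta_w\epsilon_h)_{\T_h}-(\Delta_wQ_hw,\Delta_w(u-Q_hu))_{\T_h},
}
and the proof reduces to bounding each of these six terms by $Ch^{k+1-\delta_{k,2}}(\|u\|_{k+1}+h\delta_{k,2}\|u\|_4)\|w\|_4$, after which the regularity bound closes the argument.

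Most of these terms should yield to standard techniques. I will bound $|\ell_i(w,\epsilon_h)|$ by rereading (\ref{mm1})--(\ref{mm2}) with $w$ in place of $u$, invoking only $H^4$-regularity so that the bounds for $\Delta w-\bbQ_h\Delta w$ and its gradient saturate at $Ch^2\|w\|_4$; combined with $\3bar\epsilon_h\3bar\le Ch^{k-1}(\|u\|_{k+1}+h\delta_{k,2}\|u\|_4)$ (triangle inequality with (\ref{err1}) and (\ref{eee2})) this delivers $O(h^{k+1})$. The term $(\Delta_w(w-Q_hw),\Delta_w\epsilon_h)_{\T_h}$ is dominated by $\3bar w-Q_hw\3bar\cdot\3bar\epsilon_h\3bar$, and the $H^4$-analog of (\ref{eee2}) gives $\3bar w-Q_hw\3bar\le Ch^2\|w\|_4$. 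For the last term I split $\Delta_wQ_hw=\bbQ_h\Delta w-\Delta_w(w-Q_hw)$: the second piece is bounded as above, and the first piece $(\bbQ_h\Delta w,\Delta_w(u-Q_hu))_{\T_h}$ is expanded via the defining identity (\ref{wl}) for $\Delta_w(u-Q_hu)$ with $\varphi=\bbQ_h\Delta w\in P_j(T)$, producing volumetric, consistency, and boundary contributions, each of which I will show is $O(h^{k+1})$ using (i) $L^2$-orthogonality of $u-Q_0u$ against $\epsilon_0\in P_k(T)$, (ii) Cauchy--Schwarz paired with the $(\Delta w-\bbQ_h\Delta w)$-type bounds from Lemma~\ref{l2}, and (iii) the orthogonality of $g-Q_bg$ to $P_k(e)$ and of $\phi-Q_n\phi$ to $P_{k-1}(e)$ against best polynomial approximations of the smooth boundary traces of $\nabla\Delta w$ and $\Delta w$.

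The hard part will be obtaining a sharp bound on $|\ell_1(u,Q_hw)|+|\ell_2(u,Q_hw)|$: a direct appeal to (\ref{mm1})--(\ref{mm2}) yields only $Ch^{k-1}\3bar Q_hw\3bar$, of order $h^{k-1}$---two powers short of the target. The missing $h^2$ must come from super-convergence of the projections on the smooth dual. Because $(Q_0w)|_e\in P_k(e)$, one has $Q_0w-Q_bw=Q_b(Q_0w-w)$ on each edge $e$, so by the trace inequality applied to the $L^2$-approximation error of $Q_0$,
\a{
\|Q_0w-Q_bw\|_e\le\|Q_0w-w\|_e\le C\,h_T^{\min(k+1,4)-1/2}\|w\|_4;
}
likewise, since $(\nabla Q_0w\cdot\bn)|_e\in P_{k-1}(e)$, the identity $(\nabla Q_0w-Q_n(\nabla w\cdot\bn_e)\bn_e)\cdot\bn=Q_n((\nabla Q_0w-\nabla w)\cdot\bn)$ on $e$ yields
\a{
\|(\nabla Q_0w-Q_n(\nabla w\cdot\bn_e)\bn_e)\cdot\bn\|_e\le C\,h_T^{\min(k+1,4)-3/2}\|w\|_4.
}
Inserting these super-convergent edge estimates into the Cauchy--Schwarz splittings of $\ell_1(u,Q_hw)$ and $\ell_2(u,Q_hw)$ and pairing with $\bigl(\sum h_T^3\|\nabla(\Delta u-\bbQ_h\Delta u)\|_\pT^2\bigr)^{1/2}$ and $\bigl(\sum h_T\|\Delta u-\bbQ_h\Delta u\|_\pT^2\bigr)^{1/2}$ from Lemma~\ref{l2} delivers the required $|\ell_i(u,Q_hw)|\le Ch^{k+1-\delta_{k,2}}(\|u\|_{k+1}+h\delta_{k,2}\|u\|_4)\|w\|_4$.
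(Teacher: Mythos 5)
Your proposal is correct and is, at its core, the same duality argument as the paper's: your identity for $\|\epsilon_0\|^2$ is algebraically the paper's six-term decomposition (your grouping $(\Delta_w(w-Q_hw),\Delta_w\epsilon_h)_{\T_h}-(\Delta_wQ_hw,\Delta_w(u-Q_hu))_{\T_h}$ equals the paper's $(\Delta_we_h,\Delta_w(w-Q_hw))_{\T_h}+(\Delta_ww,\Delta_w(Q_hu-u))_{\T_h}$), and your treatment of the hard terms $\ell_1(u,Q_hw)$, $\ell_2(u,Q_hw)$ --- passing from $Q_0w-Q_bw$ to $Q_b(Q_0w-w)$ and from $\nabla Q_0w\cdot\bn-Q_n(\nabla w\cdot\bn)$ to $Q_n((\nabla Q_0w-\nabla w)\cdot\bn)$, then trace plus approximation of the $H^4$ dual --- is exactly the paper's mechanism (the paper handles the second one by a triangle inequality instead of the $Q_n$ contraction, with identical rates). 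The one genuinely different ingredient is the term pairing $\Delta_w(Q_hu-u)$ with the dual solution: the paper introduces the elementwise $L^2$ projection $R_h$ onto $P_1(T)$ and observes from (\ref{wl}) and the orthogonalities defining $Q_h$ that $(\Delta_w(Q_hu-u),R_h\Delta_ww)_T=0$ (using $k\ge 2$ so that $R_h\Delta_ww|_e\in P_1(e)\subset P_{k-1}(e)$), which reduces this term to $\3bar Q_hu-u\3bar\,\|\Delta_ww-R_h\Delta_ww\|\le Ch^{k-1}\|u\|_{k+1}\cdot Ch^2\|w\|_4$; you instead expand $(\bbQ_h\Delta w,\Delta_w(u-Q_hu))_T$ through (\ref{wl}) and estimate the volume and edge contributions with projection orthogonalities and best approximations of $\Delta w$ and $\nabla\Delta w$ --- this works but is more laborious. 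Two small repairs to your sketch: in item (i) the relevant orthogonality is of $u-Q_0u$ against $P_k(T)$ (it has nothing to do with $\epsilon_0$), and it annihilates $(u-Q_0u,\Delta(\bbQ_h\Delta w))_T$ only when $j\le k+2$; otherwise bound it directly by $\|u-Q_0u\|_T\,\|\Delta(\bbQ_h\Delta w)\|_T\le Ch^{k+1}|u|_{k+1,T}\,\|w\|_{4,T}$. Also, the claim $\3bar w-Q_hw\3bar\le Ch^2\|w\|_4$ holds only for $k\ge 3$; for $k=2$ the correct saturation is $Ch^{\min(k-1,2)}\|w\|_4=Ch\|w\|_4$, which still delivers the (degraded) target rate $h^{k+1-\delta_{k,2}}$ everywhere you use it.
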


\begin{proof}
Testing (\ref{dual}) by $\epsilon_0$  and then using the equation (\ref{mmmm}) with $u=w$ and $v=\epsilon_h$, we obtain
\begin{eqnarray*}
\|\epsilon_0\|^2&=&(\Delta^2 w, \epsilon_0)\\
&=&(\Delta_w w,\ \Delta_w \epsilon_h)_{\T_h}-\langle \epsilon_0-\epsilon_b, \nabla(\bbQ_h\Delta w-\Delta w)\cdot\bn\rangle_{\pT_h}\nonumber\\
&&\ - \langle (\nabla \epsilon_0-\epsilon_n\bn_e)\cdot\bn, \Delta w-\bbQ_h\Delta w \rangle_{\pT}\\
&=&(\Delta_w w,\ \Delta_w \epsilon_h)_{\T_h}-\ell_1(w,\epsilon_h)-\ell_2(w,\epsilon_h)
\end{eqnarray*}
The error equation (\ref{ee}) gives
\begin{eqnarray*}
(\Delta_w w,\ \Delta_w \epsilon_h)_{\T_h}&=&(\Delta_w w,\ \Delta_w e_h)_{\T_h}+(\Delta_w w,\ \Delta_w (Q_hu-u))_{\T_h}\\
&=&(\Delta_w e_h,\ \Delta_w Q_hw)_{\T_h}+(\Delta_w e_h,\ \Delta_w (w-Q_hw))_{\T_h}\\
  && \ +(\Delta_w w,\ \Delta_w (Q_hu-u))_{\T_h}\\
&=&\ell_1(u,Q_hw)+\ell_2(u, Q_hw)+(\Delta_w e_h,\ \Delta_w (w-Q_hw))_{\T_h}\\
  && \ +(\Delta_w w,\ \Delta_w (Q_hu-u))_{\T_h}.
\end{eqnarray*}
Combining the two equations above, we obtain
\begin{eqnarray*}
  \|\epsilon_0\|^2&=&\ell_1(u,Q_hw)+\ell_2(u, Q_hw)+(\Delta_w e_h,\ \Delta_w (w-Q_hw))_{\T_h}\\
  && \ +(\Delta_w w,\ \Delta_w (Q_hu-u))_{\T_h}
   -\ell_1(w,\epsilon_h)-\ell_2(w,\epsilon_h)\\
&=&I_1+I_2+I_3+I_4+I_5+I_6.
\end{eqnarray*}
Next, we will estimate the all the terms on the right hand side of the above equation.
Using the Cauchy-Schwartz inequality, (\ref{trace}) and (\ref{mmm2}), we have
\begin{eqnarray*}
I_1&=&|\ell_1(u,Q_hw)|=\left|\sum_{T\in\T_h}\langle \nabla(\Delta u-\bbQ_h\Delta u)\cdot\bn, Q_0w-Q_bw\rangle_\pT\right| \\
&\le  & \left(\sum_{T\in\T_h}h^3_T\|\nabla(\Delta u-\bbQ_h\Delta
u)\|_\pT^2\right)^{\frac12}
\left(\sum_{T\in\T_h}h_T^{-3}\|Q_0w-Q_bw\|^2_{\pT}\right)^{\frac12}\\
&\le  & \left(\sum_{T\in\T_h}h^3_T\|\nabla(\Delta u-\bbQ_h\Delta
u)\|_\pT^2\right)^{\frac12}
\left(\sum_{T\in\T_h}h_T^{-3}\|Q_0w-w\|^2_{\pT}\right)^{\frac12}\\
&\le  & C h^{k+1-\delta_{k,2}}\left(\|u\|_{k+1}+h\delta_{k,2}\|u\|_{4}\right)\|w\|_4,
\end{eqnarray*}
Similarly, by the Cauchy-Schwartz inequality, (\ref{mmm1}) and (\ref{trace}), we have
\begin{eqnarray*}
I_2&=&|\ell_2(u,Q_hw)|=\left|\sum_{T\in\T_h} \langle \Delta u-\bbQ_h\Delta u, (\nabla
Q_0w\cdot\bn-Q_n(\nabla w\cdot\bn)\rangle_\pT\right|\nonumber\\
&\le&\left(\sum_{T\in\T_h} h_T\|\Delta u-\bbQ_h\Delta u\|_\pT^2\right)^{\frac12}\times\\
& & \left(\sum_{T\in\T_h} h^{-1} _T
(\|\nabla Q_0w\cdot\bn-\nabla w\cdot\bn\|_\pT^2+\|\nabla w\cdot\bn-Q_n(\nabla w\cdot\bn)\|_\pT^2)\right)^{\frac12}\nonumber\\
&\le& C h^{k+1-\delta_{k,2}}\|u\|_{k+1} \|w\|_4.
\end{eqnarray*}
It follows from (\ref{err1}) and (\ref{eee2}),
\begin{eqnarray*}
I_3&=&(\Delta_w e_h,\ \Delta_w (w-Q_hw))_{\T_h}\le  C h^{k+1-\delta_{k,2}}\|u\|_{k+1} \|w\|_4.
\end{eqnarray*}

To bound $I_4$, we define a $L^2$ projection element-wise onto $P_1(T)$ denoted by $R_h$. Then it follows from the definition of weak Laplacian (\ref{wl})
\begin{eqnarray*} &&
(\Delta_w (Q_hu-u),\ R_h\Delta_w w)_{T}\\
 &=&( Q_0u-u, \ \Delta( R_h\Delta_w w))_T-\l Q_bu-u,\
\nabla( R_h\Delta_w w)\cdot\bn\r_\pT \\
& &\ + \l (Q_n(\nabla u\cdot\bn_e)-\nabla u\cdot\bn_e)\cdot\bn, \  R_h\Delta _w w\r_\pT=0.
\end{eqnarray*}
Using the equation above and (\ref{eee2}) and the definition of $R_h$, we have
\begin{eqnarray*}
I_4&=&|\Delta_w (Q_hu-u),\ \Delta_w w)_{\T_h}|\\
&=&|(\Delta_w (Q_hu-u),\ \Delta_w w-R_h\Delta_w w)_{\T_h}|\\
&\le& Ch^{k+1}\|u\|_{k+1}\|w\|_4.
\end{eqnarray*}
Using (\ref{m1}), (\ref{mm1}), (\ref{err1}) and (\ref{eee2}), we have
\begin{eqnarray*}
I_5&=&|\ell_1(w,\epsilon_h)|\le C h^{2-\delta_{k,2}} \|w\|_{4}\3bar\epsilon_h\3bar\le C h^{2-\delta_{k,2}}\|w\|_{4}(\3bar Q_hu-u\3bar + \3bar e_h\3bar) \\
&\le& Ch^{k+1-\delta_{k,2}}\|u\|_ {k+1}\|w\|_4.
\end{eqnarray*}
Similarly, we obtain
\begin{eqnarray*}
I_6=|\ell_2(w,\epsilon_h)|\le Ch^{k+1-\delta_{k,2}}\|u\|_ {k+1}\|w\|_4.
\end{eqnarray*}
Combining all the estimates above yields
$$
\|\epsilon_0\|^2 \leq C h^{k+1-\delta_{k,2}}(\|u\|_{k+1}+ h\delta_{k,2}\|u\|_4) \|w\|_4.
$$
It follows from the above inequality and
the regularity assumption (\ref{reg}).
 $$
\|\epsilon_0\|\leq C h^{k+1-\delta_{k,2}}(\|u\|_{k+1}+ h\delta_{k,2}\|u\|_4).
$$
We have completed the proof.
\end{proof}

\section{Numerical Test} \label{Section:numerical-results}

 We solve the following 2D biharmonic equation on the unit square:
\begin{align} \label{s1}  \Delta^2 u =f  ,  \quad (x,y)\in\Omega=(0,1)^2,
\end{align} with the boundary conditions $u=g_1$ and $\nabla u\cdot \b n=g_2$ on $\partial \Omega$.
Here $f$, $g_1$ and $g_2$ are chosen so that the exact solution is
\a{ u=e^{x+y}. }

\begin{figure}[h!]
 \begin{center} \setlength\unitlength{1.25pt}
\begin{picture}(260,80)(0,0)
  \def\tr{\begin{picture}(20,20)(0,0)\put(0,0){\line(1,0){20}}\put(0,20){\line(1,0){20}}
          \put(0,0){\line(0,1){20}} \put(20,0){\line(0,1){20}}  \put(20,0){\line(-1,1){20}}\end{picture}}
 {\setlength\unitlength{5pt}
 \multiput(0,0)(20,0){1}{\multiput(0,0)(0,20){1}{\tr}}}

  {\setlength\unitlength{2.5pt}
 \multiput(45,0)(20,0){2}{\multiput(0,0)(0,20){2}{\tr}}}

  \multiput(180,0)(20,0){4}{\multiput(0,0)(0,20){4}{\tr}}

 \end{picture}\end{center}
\caption{\label{grid1} The first three levels of grids used in the computation of Table \ref{t1}. }
\end{figure}

In the first computation, the level one grid consists of two unit right triangles
     cutting from the unit square by a forward
  slash.   The high level grids are the half-size refinements of the previous grid.
The first three levels of grids are plotted in Figure \ref{grid1}.
The error and the order of convergence for the method are shown in Tables \ref{t1}.
Here on triangular grids,  we let $j=k+2$ defined in (\ref{wl}) for computing the  weak
  Laplacian $\Delta_w v$.
The numerical results confirm the convergence theory.

\begin{table}[h!]
  \centering \renewcommand{\arraystretch}{1.1}
  \caption{Error profiles and convergence rates for \eqref{s1} on triangular grids (Figure \ref{grid1}) }\label{t1}
\begin{tabular}{c|cc|cc|cc}
\hline
level & $\|u_h-  u\|_0 $  &rate & $ |u_h-u|_{1,h} $ &rate  & $\3bar u_h- u\3bar $ &rate    \\
\hline
 &\multicolumn{6}{c}{by the $P_2$ weak Galerkin finite element } \\ \hline
 5&   0.7913E-04 & 1.96&   0.5596E-03 & 2.00&   0.2764E+00 & 1.00\\
 6&   0.2016E-04 & 1.97&   0.1412E-03 & 1.99&   0.1383E+00 & 1.00\\
 7&   0.5049E-05 & 2.00&   0.3547E-04 & 1.99&   0.6912E-01 & 1.00\\
 \hline
 &\multicolumn{6}{c}{by the $P_3$ weak Galerkin finite element } \\ \hline
  3&   0.3788E-05 & 4.20&   0.1398E-03 & 3.09&   0.2949E-01 & 2.00\\
 4&   0.2114E-06 & 4.16&   0.1713E-04 & 3.03&   0.7384E-02 & 2.00\\
 5&   0.1284E-07 & 4.04&   0.2128E-05 & 3.01&   0.1848E-02 & 2.00\\
 \hline
\end{tabular}%
\end{table}%

In the next computation,  we use a family of polygonal grids (with pentagons)
   shown in Figure \ref{5g}.
We let the polynomial degree $j=k+3$ for the weak Laplacian on such polygonal meshes.
The rate of convergence is listed in Table \ref{t2}.
The convergence history confirms the theory.

\begin{figure}[h!]
 \begin{center} \setlength\unitlength{1.45pt}
\begin{picture}(245,80)(0,0)
  \def\tr{\begin{picture}(20,20)(0,0)
            \put(0,0){\line(1,0){20}}\put(0,20){\line(1,0){20}}
          \put(0,0){\line(0,1){20}} \put(20,0){\line(0,1){20}}
          \put(10,0){\line(0,1){3}}\put(10,20){\line(0,-1){3}}
           \put(0,10){\line(1,0){3}}\put(20,10){\line(-1,0){3}}
           \put(3,10){\line(1,1){7}}\put(3,10){\line(1,-1){7}}
         \put(17,10){\line(-1,-1){7}}\put(17,10){\line(-1,1){7}}
  \end{picture}}
 {\setlength\unitlength{5.8pt}
 \multiput(0,0)(20,0){1}{\multiput(0,0)(0,20){1}{\tr}}}

  {\setlength\unitlength{2.9pt}
 \multiput(41,0)(20,0){2}{\multiput(0,0)(0,20){2}{\tr}}}

  \multiput(164,0)(20,0){4}{\multiput(0,0)(0,20){4}{\tr}}

 \end{picture}\end{center}
\caption{  The first three levels of polygonal grids used in the computation of Table \ref{t2}. }
   \label{5g}
\end{figure}

\begin{table}[h!]
  \centering \renewcommand{\arraystretch}{1.1}
  \caption{Error profiles and convergence rates for \eqref{s1}
        on polygonal grids (Figure \ref{5g}) }\label{t2}
\begin{tabular}{c|cc|cc|cc}
\hline
level & $\|u_h-  u\|_0 $  &rate & $ |u_h-u|_{1,h} $ &rate  & $\3bar u_h- u\3bar $ &rate    \\
\hline
 &\multicolumn{6}{c}{by the $P_2$ weak Galerkin finite element } \\ \hline
 3&   0.5699E-03 &  2.6&   0.8766E-02 &  1.9&   0.4895E+01 &  1.0 \\
 4&   0.1035E-03 &  2.5&   0.2346E-02 &  1.9&   0.2445E+01 &  1.0 \\
 5&   0.2477E-04 &  2.1&   0.6175E-03 &  1.9&   0.1222E+01 &  1.0 \\
 6&   0.6835E-05 &  1.9&   0.1598E-03 &  2.0&   0.6112E+00 &  1.0 \\ \hline
 &\multicolumn{6}{c}{by the $P_3$ weak Galerkin finite element } \\ \hline
 1&   0.1571E-02 &  0.0&   0.1905E-01 &  0.0&   0.3251E+01 &  0.0 \\
 2&   0.9077E-04 &  4.1&   0.2259E-02 &  3.1&   0.7397E+00 &  2.1 \\
 3&   0.5368E-05 &  4.1&   0.2888E-03 &  3.0&   0.1793E+00 &  2.0 \\
 4&   0.3474E-06 &  3.9&   0.3939E-04 &  2.9&   0.4445E-01 &  2.0 \\
 \hline
\end{tabular}%
\end{table}%

\end{document}